\def\from{\colon}
\def\Julia{\mathcal{J}}
\def\Rat{\mathrm{Rat}}
\def\QB{\mathcal{QB}}
\def\Cbb{\mathbb{C}}
\def\Rbb{\mathbb{R}}
\def\Bcal{\mathcal{B}}
\def\Hcal{\mathcal{H}}
\def\Pcal{\mathcal{P}}
\newcommand{\act}{\reflectbox{$\righttoleftarrow$}}
\DeclareMathOperator{\PSL}{PSL}
\newtheorem{thm}{Theorem}[section]
\newtheorem{lem}[thm]{Lemma}
\newtheorem{prop}[thm]{Proposition}
\newtheorem{rem}[thm]{Remark}
\newtheorem{theorem}{Theorem}
\theoremstyle{definition}
\newtheorem{defn}[thm]{Definition}
\theoremstyle{remark}
\title[Analytic theory on the space of Blaschke products]{Analytic theory on the space of Blaschke products: Simultaneous uniformization and pressure metric}
\author{Yan Mary He}
\address{Department of Mathematics, University of Oklahoma, Norman, OK 73019, USA}
\email{he@ou.edu}
\author{Homin Lee}
\address{Center for Mathematical Challenges, Korea Institute for Advanced Study, 85 Hoegiro Dongdaemun-gu, Seoul, Republic of Korea}
\email{hominlee@kias.re.kr}
\author{Insung Park}
\address{Institute for Mathematical Sciences, Stony Brook University, Stony Brook, NY 11794, USA} 
\email{insung.park@stonybrook.edu}
\date{\today\\ MSC Class: 37F10; 37F31; 32G15}
\begin{document}

\begin{abstract}
In this paper, we study complex analytic aspects of the moduli space $\Bcal_d^{fm}$ of degree $d\ge2$ fixed-point-marked Blaschke products. We define a complex structure on $\Bcal_d^{fm}$ and prove the simultaneous uniformization theorem for fixed-point-marked quasi-Blaschke products. As an application, we show that the pressure semi-norm is non-degenerate along all directions transverse to a holomorphic codimension-1 foliation 
of $\Bcal^{fm}_d$.
\end{abstract}

\maketitle
\section{Introduction}
On the Riemann sphere $\mathbb{P}^1$, the dynamics of rational maps and those of Kleinian groups exhibit many analogous phenomena, which are known as Sullivan's dictionary. Within this framework, McMullen has shown that Blaschke products and Fuchsian groups, as well as their respective deformation spaces, share many similar dynamical properties \cite{McMullen08,McM_simul,McM_CompExpCircle,McM_DynDisk}. Very recently, Luo has extended this analogy to quasi-Blaschke products and quasi-Fuchsian groups, and has proved various results for quasi-Blaschke products which have counterparts in the 3-dimensional hyperbolic geometry \cite{Luo_GeoFiniteDegenI, Luo_GeoFiniteDegenII}. 
In an earlier paper \cite{HLP23}, we studied the degenerate locus of the {\it pressure semi-norm} on the space $\QB_d^{fm}$ of degree $d \ge 2$ fixed-point-marked quasi-Blaschke products, which parallels the work of Bridgeman \cite{Bridgeman_WPMetricQF} in the context of quasi-Fuchsian groups. 
In this paper, we further develop the analogies between (quasi-)Blaschke products and (quasi-)Fuchsian groups.

In \cite{McMullen08}, McMullen defined the {\it Weil--Petersson semi-norm} $||\cdot||_{WP}$ on the space $\Bcal_d^{fm}$ of degree $d \ge 2$ fixed-point-marked Blaschke products. We call it the Weil–Petersson semi-norm because a similar construction in the setting of Fuchsian groups yields a constant multiple of the Weil–Petersson norm; see  \cite[Theorem 1.12]{McMullen08}. McMullen also showed that there are several equivalent ways to construct the semi-norm $||\cdot||_{WP}$ on $\Bcal_d^{fm}$; see \cite[Theorems 1.6 and 1.7]{McMullen08}.
However, it remains unknown whether this semi-norm is non-degenerate, namely, if there exist $[f] \in \Bcal_d^{fm}$ and a non-zero tangent vector $\vec{v} \in T_{[f]}\Bcal_d^{fm}$ with $||\vec{v}||_{WP}=0$.
The main goal of this paper is to address this non-degeneracy question.

As a byproduct of proving the non-degeneracy of the semi-norm $||\cdot||_{WP}$, we obtain a simultaneous uniformization theorem for quasi-Blaschke products, which is analogous to Bers' 
simultaneous uniformization theorem for quasi-Fuchsian groups and is interesting in its own right.

\subsection{Pressure semi-norms and degenerating vectors}
Let $d\ge2$ be an integer. To avoid orbifold singularities, we consider {\it fixed-point-marked rational maps}, which are rational maps together with ordered $(d+1)$-tuples of their fixed points $(f;x_1,x_2,\dots,x_{d+1})$. We denote by ${\rm rat}^{fm}_d$ the moduli space of fixed-point-marked rational maps.
A hyperbolic rational map $f$ is a {\it quasi-Blaschke product} if its Julia set $\Julia
(f)$ is a quasicircle and $f$ fixes each of the two Fatou components. A quasi-Blaschke product is a {\it Blaschke product} if $\Julia(f)$ is smooth.
We consider the moduli space of fixed-point-marked quasi-Blaschke products (resp.\@ Blaschke products) $\QB^{fm}_d$ (resp.\@ $\Bcal^{fm}_d$); see Section \ref{sec_defQBd} for a detailed discussion.


Fix $[f] \in \mathcal{B}_d^{fm}$. Then $f$ has two attracting or super-attracting fixed points whose multipliers are complex conjugate to each other. One of them, denoted by $\lambda_{h.att}([f])$, varies holomorphically with $[f]$ with respect to the holomorphic structure on $\mathcal{B}_d^{fm}$ which we will discuss in Section \ref{sec_intro_1.2}. We call $\lambda_{h.att}([f])$ the \emph{holomorphic attracting multiplier} of $[f]$; see Definition~\ref{def_anti_hol_fixed}.

Let $\Delta:=\{z\in\Cbb : |z|<1\}$.

\begin{theorem}\label{main theorem 2}
For $d\ge 2$ and $\lambda\in\Delta$, we define
\begin{equation}\label{eq_intro_SA}
    \mathcal{A}^{fm}_d(\lambda) := \left\{[f] \in \Bcal_d^{fm} : \lambda_{h.att}([f])=\lambda \right\}.  
\end{equation}
Consider $T\mathcal{A}_d^{fm}:=\bigcup_{\lambda\in \Delta} T\mathcal{A}^{fm}_d(\lambda)$ as a sub-bundle of $T\Bcal_d^{fm}$.
Then the following hold.
\begin{enumerate}
    \item For all $\vec{v} \in T\Bcal_d^{fm}\setminus T\mathcal{A}_d^{fm}$, we have $||\vec{v}||_{WP} \neq 0$.
    \item For Lebesgue almost every $\vec{v}\in T\mathcal{A}_d^{fm}$, we have $||\vec{v}||_{WP}\neq0$.
\end{enumerate}
\end{theorem}

\begin{theorem}\label{main theorem 3}
The Weil--Petersson semi-norm $||\cdot ||_{WP}$ is non-degenerate on $\Bcal_2^{fm}$.
\end{theorem}

In Proposition \ref{prop:non-deg for deg3}, we show the non-degeneracy of $||\cdot||_{WP}$ on $\mathcal{B}_3^{fm}$ outside the locus where each attracting fixed point has exactly two points in its preimage, including itself.

The Weil--Petersson semi-norm $||\cdot||_{WP}$ is a constant multiple of the so-called {\it pressure semi-norm} $||\cdot||_{\mathcal{P}}$, more precisely $||\cdot||_{WP} =\frac{1}{2}||\cdot||_{\Pcal}$; see Section \ref{sec_Curt's_metric} for details. In general, the pressure semi-norms $||\cdot||_{\Pcal}$ can be defined on any hyperbolic component of ${\rm rat}^{fm}_d$, such as the space of quasi-Blaschke product $\QB^{fm}_d$; see \cite{HeNie_MetricHypComp,HLP23}. In \cite[Theorem 3.8]{HLP23}, we characterized the degeneracy loci of the pressure semi-norms $||\cdot||_{\Pcal}$ on the space of quasi-Blaschke products $\QB^{fm}_d$ under the assumption that $||\cdot||_{\mathcal{P}}$ on $\Bcal_d^{fm}$ is non-degenerate. This characterization is motivated, via Sullivan’s dictionary, by \cite[Main Theorem]{Bridgeman_WPMetricQF}.

We say that Blaschke products satisfy the {\it infinitesimal marked multiplier rigidity} if the following holds: For a smooth family of Blaschke product $(f_t)_{t\in (-1,1)}$, if $\left.\frac{d}{dt}\right|_{t=0}\lambda_C(f_t)=0$ for every repelling multiplier $\lambda_C$, then $(f_t)_{t\in (-1,1)}$ is tangent at $t=0$ to the locus of $\PSL(2,\Cbb)$-conjugations of $f_0$. See Section \ref{sec_defQBd} for functions of repelling multipliers $\lambda_C(\cdot)$.
The non-degeneracy of $||\cdot||_{\mathcal{P}}$ is equivalent to the infinitesimal marked multiplier rigidity (Lemma \ref{lem_degen}).

In \cite[Section~5.7]{BH24}, Bianchi and the first-named author proved that if $||\cdot||$ is an analytic semi-norm on a manifold $M$ and $\int_0^1 ||\gamma'(t)||\, dt > 0$ for every $C^1$-curve $\gamma \colon [0,1] \to M$, then $||\cdot||$ defines a path metric on $M$. We verify that our Weil--Petersson semi-norm $||\cdot||_{WP}$ satisfies the hypothesis of \cite[Section~5.7]{BH24} and obtain the following theorem.

\begin{theorem}\label{theorem:Path metric}
    The Weil--Petersson semi-norm $||\cdot||_{WP}$ defines a path metric $d_{WP}$ on $\mathcal{B}_d^{fm}$. In particular, for any $[f], [g] \in \mathcal{B}_d^{fm}$, we have
    \[
        \inf_{\gamma} \int_{\gamma} ||\gamma'(t)||_{WP}\, dt > 0,
    \]
    where the infimum is taken over all $C^1$-curves $\gamma$ connecting $[f]$ and $[g]$.
\end{theorem}

In the case of degree $2$, Ivrii showed that the metric space $(\Bcal_2,d_{WP})$ is incomplete and suggested a conjecture on its completion \cite{Ivrii}.

\subsection{Simultaneous uniformization of quasi-Blaschke products} \label{sec_intro_1.2}
Our proof of Theorem \ref{main theorem 2} relies on a holomorphic structure $\Bcal_d^{fm}$. With respect to the natural holomorphic structure on $\QB_d^{fm}$ induced from the coefficients of rational maps, the Blaschke locus $\Bcal_d^{fm}$ is a non-holomorphic real analytic submanifold of $\QB_d^{fm}$. We adopt Bers’ idea to endow $\Bcal_d^{fm}$ with a holomorphic structure. In this context, it is natural to discuss the simultaneous uniformization of quasi-Blaschke products.

If $S$ is a closed orientable surface of genus $g \ge 2$, the {\it Teichm\"uller space} $T(S)$ of $S$ is the space of isotopy classes of hyperbolic structures on $S$, and the {\it quasi-Fuchsian space} $\mathcal{QF}(S)$ of $S$ is the space of quasiconformal deformations of the Fuchsian group $\rho(\pi_1(S))$, where $\rho \in T(S)$. In the 1960s, Bers proved a remarkable theorem that the space $\mathcal{QF}(S)$ is biholomorphic to the product $T(S) \times T(\overline{S})$ where $\overline{S}$ is the same surface as $S$ with opposite orientation \cite{Bers_SimultaneousUniform}. Bers' theorem states that a quasi-Fuchsian group can be simultaneously uniformized by two Fuchsian groups. Analogously, we establish a simultaneous uniformization for quasi-Blaschke products as follows.

\begin{theorem}\label{main theorem 1}
There exists a biholomorphism
    \[
        \mathcal{U} \from \Bcal_d^{fm} \times \overline{\Bcal_d^{fm}} \to \QB_d^{fm}
    \]
    such that for any fixed-point-marked Blaschke products $f$ and $g$, $\mathcal{U}([f],[g])$ restricted to its two Fatou components are biholomorphically conjugate to $f|_\Delta$ and $g|_{1/\Delta}$, respectively, where $\Delta:=\{z\in \mathbb{P}^1:|z|<1\}$ and $1/\Delta:=\{z\in \mathbb{P}^1:|z|>1\}$. 
\end{theorem}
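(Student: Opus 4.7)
The plan is to construct the inverse $\mathcal{V}\from \QB_d^{fm} \to \Bcal_d^{fm} \times \overline{\Bcal_d^{fm}}$ by separately uniformizing the two Fatou components of a quasi-Blaschke product, construct $\mathcal{U}$ as the inverse by a quasiconformal surgery gluing two Blaschke products along $\Sbb^1$, and then verify that both maps are holomorphic and mutually inverse.

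For $\mathcal{V}$: given $F \in \QB_d^{fm}$, the Julia set $\Julia(F)$ is a Jordan curve, so the Fatou set splits into two simply connected $F$-invariant components $U^+$ and $U^-$, each containing a unique attracting fixed point. Uniformize $U^+$ by a Riemann map $\phi^+ \from \Delta \to U^+$ normalized using the marked fixed points; then $f := (\phi^+)^{-1} \circ F \circ \phi^+$ is a proper holomorphic degree-$d$ self-map of $\Delta$, i.e., a Blaschke product. The analogous construction on $U^-$ using a Riemann map from $1/\Delta$ yields $g \in \overline{\Bcal_d^{fm}}$. Holomorphic dependence of Riemann maps on holomorphic families of quasidisks makes $\mathcal{V}$ holomorphic.

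For $\mathcal{U}$: given $(f,g) \in \Bcal_d^{fm} \times \overline{\Bcal_d^{fm}}$, let $h \from \Sbb^1 \to \Sbb^1$ be the unique orientation-preserving topological conjugacy between the expanding degree-$d$ boundary maps $f|_{\Sbb^1}$ and $g|_{\Sbb^1}$ that respects the fixed-point marking on the circle, and let $H \from 1/\Delta \to 1/\Delta$ be the Douady--Earle quasiconformal extension of $h$. Define a quasiregular map $F_0$ on $\hCbb$ by $F_0 = f$ on $\overline{\Delta}$ and $F_0 = H^{-1} \circ g \circ H$ on $1/\Delta$; continuity across $\Sbb^1$ follows because $h$ conjugates the boundary dynamics. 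Let $\sigma$ be the measurable complex structure on $\hCbb$ that equals the standard structure $\sigma_0$ on $\overline{\Delta}$ and equals $H^*\sigma_0$ on $1/\Delta$. Since $g$ is holomorphic, $\sigma$ is $F_0$-invariant; since $H$ is quasiconformal, $\sigma$ has bounded dilatation. The Measurable Riemann Mapping Theorem produces a quasiconformal $\Phi \from \hCbb \to \hCbb$ straightening $\sigma$, and $\mathcal{U}(f,g) := \Phi \circ F_0 \circ \Phi^{-1}$ is a rational quasi-Blaschke product; its marking is inherited from the $\Phi$-images of the marked fixed points of $F_0$.

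The main obstacles are showing $\mathcal{U}$ is well-defined and holomorphic. For well-definedness, two admissible extensions $H, H'$ of $h$ give invariant structures whose straightenings differ by a M\"obius transformation preserving the marking, hence represent the same point in $\QB_d^{fm}$. Holomorphy of $\mathcal{U}$ follows from the Ahlfors--Bers parametric Measurable Riemann Mapping Theorem combined with the holomorphic dependence of the Douady--Earle extension on boundary data, which together make $\sigma$, and hence $\Phi$, vary holomorphically in $(f,g)$. Finally $\mathcal{V} \circ \mathcal{U} = \mathrm{id}$ by inspection: the Riemann maps of the two Fatou components of $\mathcal{U}(f,g)$ are $\Phi$ and $\Phi \circ H^{-1}$, which conjugate its dynamics on these components back to $f$ and $g$ respectively; the reverse composition is the identity by reversing the construction. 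Since $\mathcal{U}$ and $\mathcal{V}$ are mutually inverse holomorphic maps between complex manifolds of the same dimension, they are biholomorphisms.
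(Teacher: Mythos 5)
Your construction is essentially McMullen's mating/welding picture (the paper's Section~\ref{sec_2.2}), and the surgery itself is fine, but there is a genuine gap at the very point the theorem is about: you treat the complex structure on $\Bcal_d^{fm}$ as already given. It is not. Blaschke products modulo $\Aut_{\Cbb}(\Delta)$ form a priori only a real-analytic manifold, and the paper stresses that the statement of Theorem~\ref{main theorem 1} only makes sense after a holomorphic structure on $\Bcal_d^{fm}$ has been specified; the paper manufactures one (Definition~\ref{def:HoloStrB_d}) by first proving the uniformization $\Theta\from \Hcal_d^{fm}\times\overline{\Hcal_d^{fm}}\to\QB_d^{fm}$ with the central polynomial component (Theorem~\ref{thm_simul}) and pulling back the structure of $\Hcal_d^{fm}$ along $p_1\circ\Theta|_{\rm Diag}^{-1}$. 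Without saying which structure you use, your claims that $\mathcal{U}$ and $\mathcal{V}$ are ``holomorphic'' are circular: you would in effect be defining holomorphy on $\Bcal_d^{fm}$ by the very maps whose holomorphy you are asserting.

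Even granting a chosen structure, the key analytic step is unsupported. You claim that the Douady--Earle extension of the boundary conjugacy $h$, and hence the invariant Beltrami form $\sigma$ and the straightening $\Phi$, ``vary holomorphically in $(f,g)$''; but the circle conjugacy between $f|_{S^1}$ and $g|_{S^1}$ depends only real-analytically on the Blaschke products, so there is no a priori holomorphic parameter dependence to feed into Ahlfors--Bers. The paper avoids this by running the surgery over $\Hcal_d^{fm}$, where the conjugacies near the Julia sets move holomorphically by holomorphic motions, and even then one more step is needed: pointwise holomorphy of $t\mapsto\Phi_t(z)$ does not directly give holomorphy of $t\mapsto F_t=\Phi_t\circ F_0\circ\Phi_t^{-1}$, because $\Phi_t^{-1}$ is not holomorphic in $t$. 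Since $\dim_{\Cbb}\QB_d^{fm}>3$, the three-point argument that works for M\"obius families is insufficient; the paper tracks the $d-1$ fixed points on the Julia set together with the $d$ preimages of the fixed point $1$ and reconstructs the coefficients of $F_t$ (Equation~\eqref{eqn:p_t and q_t}) to conclude the family is holomorphic. Your proposal needs both the definition of the structure on $\Bcal_d^{fm}$ and an argument of this kind before the biholomorphism claim is justified; the well-definedness and $\mathcal{V}\circ\mathcal{U}=\mathrm{id}$ parts are fine.
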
 

Note that the statement of Theorem \ref{main theorem 1} relies on the holomorphic structure of $\Bcal_d^{fm}$. One way to define a holomorphic structure on $\Bcal_d^{fm}$ is to use a diffeomorphism between $\Bcal_d^{fm}$ and the main hyperbolic component $\Hcal_d^{fm}$ in ${\rm poly}_d^{fm}$, which is the hyperbolic component containing $(z^d;0,\infty,1, \zeta,\zeta^2,\dots,\zeta^{d-2})$, where $\zeta=e^{2 \pi i \frac{1}{d-1}}$. It is known that $\Hcal_d^{fm}$ is a complex manifold, and thus we can pull its holomorphic structure back to $\Bcal_d^{fm}$. It might be possible to construct a diffeomorphism between $\Bcal_d^{fm}$ and $\Hcal_d^{fm}$ directly by using quasiconformal surgeries. However, in this paper, we first prove a simultaneous uniformization of quasi-Blaschke products into a pair of polynomials in $\Hcal_d^{fm}$; see Theorem \ref{thm_simul}. Then, as corollaries, we obtain the desired diffeomorphism $\Bcal_d^{fm}\to \Hcal_d^{fm}$ as well as Theorem \ref{main theorem 1}. The hyperbolic component $\Hcal_d^{fm}$ is an analogue of the Bers slice in the classical theory of Teichm\"uller space.

In the 1980s, McMullen introduced a method for gluing two Blaschke products of the same degree into a quasi-Blaschke product via conformal welding \cite{McM_simul}; also see \cite[Proposition 5.5]{McM_AutRat}. This construction can be interpreted as the {\it mating} of two Blaschke products—--a term commonly used in complex dynamics to describe the combination of two dynamical systems. However, 
compared to McMullen's work, our work addresses the {\it holomorphy} of the simultaneously uniformizing map. Our construction is essentially equivalent to the one by McMullen; see Lemma \ref{lem:Equiv Btw Mating Maps}. The idea of using $\Hcal_d^{fm}$ to define a holomorphic structure on $\Bcal_d^{fm}$ is also mentioned in \cite[p.\@ 371]{McMullen08} and \cite{Milnor_HypComp}.

\subsection{Summary of the proofs}
As mentioned earlier, both the definition of the holomorphic structure on $\Bcal_d^{fm}$ and the proof of Theorem \ref{main theorem 1} follow from Theorem \ref{thm_simul}. To prove Theorem \ref{thm_simul}, we use the theory of quasiconformal surgery, which is also the key idea in Bers' proof of his simultaneous uniformization theorem \cite{Bers_SimultaneousUniform}. However, for quasi-Blaschke products, the proof requires more technical details when showing the analytic dependence of solutions of Beltrami equations than Bers' in the setting of quasi-Fuchsian groups, due to the fact that $\dim_\Cbb\PSL(2,\Cbb)=3$ while $\dim_{\Cbb} \QB_d^{fm}=2d-1$. 
In the case of Fuchsian groups, a family of M\"obius transformation $(\gamma_t)_{t\in \Delta}$ is holomorphic in $t$ if and only if for any three points $\{z_i\}_{i=1,2,3} \in \hat{\Cbb}$, $t \mapsto \gamma_t(z_i)$ is holomorphic for each $i\in\{1,2,3\}$. See the application of \cite[Proposition 4.8.19]{Hub_vol1} in the proof of \cite[Proposition 6.12.4]{Hub_vol1}.
However, for quasi-Blaschke products, since $\dim_{\Cbb} \QB_d^{fm}=2d-2$, we need $2d-2$ points moving holomorphically. We will normalize quasi-Blaschke products so that $0$ and $\infty$ are attracting fixed points and $1$ is a repelling fixed point. Then we use the $d-1$ fixed points on the Julia sets together with $d$ preimages of the point $z=1$; as the fixed point $z=1$ is a preimage of itself, we indeed use $2d-2$ points.

To prove Theorem \ref{main theorem 2}-(1), suppose that there exists a non-zero tangent vector $\vec{v}\in T_{[f]}\Bcal^{fm}_d$ such that $||\vec{v}||_{WP}=0$. 
Using the relation with the infinitesimal marked multiplier rigidity (Lemma \ref{lem_degen}) and the holomorphic index formula (Equation \eqref{eqn:HoloIndex}), we obtain three possible cases in Lemma \ref{lem_cases}. Then, using the holomorphic structure of $\Bcal_d^{fm}$, the quasiconformal surgeries from Theorem \ref{main theorem 1}, and an idea due to Oleg Ivrii (Proposition \ref{prop_Ivrii_trick}), we show that one case cannot occur. The other two cases reduce to the situation where $\vec{v}\in T\mathcal{A}^{fm}_d$.
Theorem \ref{main theorem 2}-(2) follows from the fact that $\int_0^1 |\gamma'(t)|\, dt > 0$ for every $C^1$-curve $\gamma$; see Lemma \ref{p:non-deg-analytic-paths}.

\subsection*{Acknowledgement} The authors thank Oleg Ivrii for suggesting Proposition \ref{prop_Ivrii_trick}, Kostiantyn Drach for a helpful conversation, and Curtis McMullen for valuable comments. HL was supported in part by AMS-Simons travel grant, a KIAS Individual Grant (HP104101) via the June E Huh Center for Mathematical Challenges at Korea Institute for
Advanced Study, and Sanghyun Kim's 
Mid-Career Researcher Program (RS-2023-00278510) through the National Research Foundation funded by the government of Korea.

\section{Preliminaries} \label{sec_Prel}
In this section, we collect preliminary results that we will need for the rest of the paper. 
In Section \ref{sec_defQBd}, we review the definitions of fixed-point-marked (quasi)-Blaschke products and their associated spaces, following \cite{HLP23}.
In Section \ref{sec_2.2}, we record McMullen's construction of {\it matings} of two Blaschke products in \cite{McM_simul}. In Section \ref{sec_Curt's_metric}, we review McMullen's construction of the {\it Weil--Petersson semi-norm} on the space of Blaschke products in \cite{McMullen08}.

\subsection{The space of fixed-point-marked quasi-Blaschke products}\label{sec_defQBd}
In this section, we recall the space $\QB_d^{fm}$ of conjugacy classes of degree $d \ge 2$ quasi-Blaschke products with marked fixed points, which is rigorously defined in \cite{HLP23}. We also refer the reader to \cite{Milnor_HypComp} for a more general account of the hyperbolic components of fixed-point-marked rational maps.

For $d\ge 2$, any degree-$d$ rational map $f$ has $(d+1)$ fixed points $x_1,x_2,\dots,x_{d+1}$ counted with multiplicity. A rational map $f$ together with an ordered $(d+1)$-tuple of its fixed points $(f; x_1,x_2,\dots,x_{d+1})$ is called a {\it rational map with marked fixed points} or a {\it fixed-point-marked rational map}. For simplicity, we sometimes omit $x_i$'s and say that $f$ is a fixed-point-marked rational map when the marking of the fixed points are understood or inessential in the context. Denote by ${\rm Fix}(f)$ the unordered set of $(d+1)$ fixed points counted with multiplicity.

We define the space ${\rm Rat}_d^{fm}$ of degree-$d$ fixed-point-marked rational maps by
\[
    {\rm Rat}_d^{fm}:= \left\{(f;x_1,x_2,\dots,x_{d+1})\in {\rm Rat}_d \times \left(\mathbb{P}^{1}\right)^{d+1}~|~\{x_1,x_2,\dots,x_{d+1}\}={\rm Fix}(f)\right\}.
\]
By \cite[Lemma 9.2]{Milnor_HypComp}, $\Rat_d^{fm}$ is a smooth manifold. We remark that having $d+1$ distinct fixed points is equivalent to having no fixed points with multiplier $1$.

Recall that a rational map $f$ is {\it hyperbolic} if every critical point of $f$ lies in an attracting basin. We say that a fixed-point-marked rational map $(f;x_1,x_2,\dots,x_{d+1})$ is {\it hyperbolic} if $f$ is a hyperbolic rational map. A hyperbolic component in ${\rm Rat}_d^{fm}$ is a connected component of the subset of hyperbolic fixed-point-marked rational maps.

M\"obius transformations $\phi \in \PSL(2,\mathbb{C})$ act on ${\rm Rat}^{fm}_d$ by
\[
    \phi\cdot (f;x_1,x_2,\dots,x_{d+1}):= (\phi \circ f \circ \phi^{-1}; \phi(x_1),\phi(x_2),\dots,\phi(x_{d+1})),
\]
such that the action is free on the complement of the locus consisting of rational maps having less than three fixed points. In particular, the action is free on the set of hyperbolic fixed-point-marked rational maps. We denote by ${\rm rat}_d^{fm}$ the quotient of ${\rm Rat}_d^{fm}$ by the $\PSL(2,\mathbb{C})$-action and call it the space of conjugacy classes of degree-$d$ rational maps with marked fixed points. We refer the reader to \cite[Section 9]{Milnor_HypComp} for details on fixed-point-marked rational maps.

\begin{defn}[Quasi-Blaschke products] \label{def_QB}
A hyperbolic rational map $f$ is said to be a {\it quasi-Blaschke product} if its Julia set $\mathcal{J}(f)$ is a quasi-circle, and $f$ fixes each of the two Fatou components.  
\end{defn}

\begin{defn}[Space of quasi-Blaschke products]
For $d\ge2$, we define $\widetilde{\QB}^{fm}_d$ as the connected component of fixed-point-marked degree-$d$ quasi-Blaschke products containing $(z^d;0,\infty,1,\zeta,\zeta^2,\dots,\zeta^{d-2})$, where $\zeta=e^{2\pi i\frac{1}{d-1}}$.
We define the moduli space of degree-$d$ fixed-point-marked quasi-Blaschke products $\QB^{fm}_d$ by $\QB^{fm}_d:=\widetilde{\QB}^{fm}_{d}/\PSL(2,\mathbb{C})$.
\end{defn}

The spaces $\widetilde{\QB}^{fm}_d$ and $\QB^{fm}_d$ are complex manifolds, and they are hyperbolic components of ${\rm Rat}_d^{fm}$ and ${\rm rat}_d^{fm}$, respectively, \cite[Lemma 3.5]{HLP23}.

\begin{defn}[Space of Blaschke products]
Define the moduli space of degree-$d$ fixed-point-marked Blaschke products $\Bcal_d^{fm}$ as a subspace of $\QB_d^{fm}$ by
\[
    \Bcal_d^{fm} : =\left\{[f] \in \QB_d^{fm}: f \text{ is a Blaschke product} \right\}.
\]
\end{defn}

Recall that for $d\ge 2$, a degree-$d$ {\it Blaschke product} is a rational map $f \from \mathbb{P}^1 \to \mathbb{P}^1$ of the form
    \begin{equation}\label{eq_def_blaschke}
        f(z) = e^{2\pi i\theta}\prod_{i=1}^{d} \frac{z-a_i}{1-\overline{a_i}z},
    \end{equation}
where $(a_1,\ldots, a_d)\in \Delta^{d}$ and $\theta\in \mathbb{R}/\mathbb{Z}$. Here, $\Delta=\{z\in \mathbb{P}^{1}:|z|<1\}$.  Its Julia set $\Julia(f)$ is the unit circle $\partial \Delta$. It is easy to show that the space $\Bcal^{fm}_d$, which is defined as a set of ${\rm PSL}(2,\mathbb{C})$-conjugacy classes of certain fixed-point-marked rational maps, is equivalent to the space of ${\rm Aut}_{\mathbb{C}}(\Delta)$-conjugacy classes of Blaschke products, which are of the form in Equation \eqref{eq_def_blaschke}, with marked fixed points.

\begin{defn}[Standard representatives]\label{defn:StandardRep}
    For any $[(f;x_1,x_2,\dots,x_{d+1})]$ in $\Bcal_d^{fm}$ or $\QB_d^{fm}$, we say that $(f;x_1,x_2,\dots,x_{d+1})$ is a {\it standard representative} of its $\PSL(2,\mathbb{C})$-conjugacy class if $f$ is a Blaschke product, $x_1=0,x_2=\infty$, and $x_3=1$. 
\end{defn}

It is easy to show that for any $[f]\in \Bcal^{fm}_d$, if $f$ is the standard representative of $[f]$, then $f$ has the form
\[
    f(z) = \left(\prod_{i=1}^{d-1}\frac{1-\overline{a_i}}{1-a_i}\right)z\prod_{i=1}^{d-1} \frac{z-a_i}{1-\overline{a_i}z},
\]
together with the marking of its fixed points, where $a_i$'s are in $\Delta$.

\subsection*{Multiplier functions}
Consider $(z^d;0,\infty,1,\zeta^1,\dots,\zeta^{d-2})$, a fixed-point-marked monomial, where $\zeta=e^{2\pi i/(d-1)}$. For any standard representative of the fixed-point-marked Blaschke product $(f;0,\infty,1,x_4,\dots,x_{d+1})$, there exists a unique homeomorphism $\phi_f \colon S^1 \to S^1$ conjugating $(S^1,z\mapsto z^d)$ to $(S^1,f)$ such that $\phi(\zeta^k)=x_{k+3}$ for $k\in\{1,2,\dots,d-2\}$.

Suppose that $C = \{x = x^{d^n},x^{d},x^{d^2},\ldots,x^{d^{(n-1)}}\}$ is an $n$-cycle with $n\ge 1$ of the map $z \mapsto z^d$ on the unit circle. Then, $\phi_f(C)$ is an $n$-cycle of $f$ whose multiplier $\lambda_C(f)$ is
$$\lambda_C(f) := (f^n)'(\phi_f(x)).$$
Since $\phi_f$ is uniquely determined, we call $\phi_f(C)$ the {\it $n$-cycle of $C$ for $f$}  and $\lambda_C(f):=\lambda_{\phi_f(C)}(f)$ the {\it multiplier of the $n$-cycle $C$ for $f$}. Since the multipliers are invariant under smooth conjugacies, we can also consider $\lambda_C$ as a function $\lambda_C\from \Bcal_d^{fm}\to \Rbb$, sending $[f]\mapsto \lambda_C([f]):=\lambda_C(f)$.

We recall that for any $[f]\in \Bcal_d^{fm}$ and any $n$-cycle $C$ in $S^1$, the cycle $C$ is {\it repelling} for $[f]$ , i.e., $|\lambda_C([f])|>1$.

\subsection*{Holomorphic index formula} Recall that any Blaschke product $f$ has two attracting fixed points: one, denoted by $x$, lies inside $\Delta$ and the other $1/\overline{x}$ inside $1/\Delta$. Note that $f$ is symmetric under the anti-holomorphic reflection $1/\overline{z}$, i.e., $f\circ(1/\overline{z})=(1/\overline{z})\circ f$. Hence, for the attracting multiplier $\lambda_{att}(f)$ at $x$, the attracting multiplier at $1/\overline{x}$ is equal to $\overline{\lambda_{att}(f)}$. 

We note that for any $n\ge1$, $f^{\circ n}$ is also a Blaschke product with $\lambda_{att}(f^{\circ n})=\lambda_{att}(f)^n$. Hence, $1-\lambda_{att}(f)^n \neq 0$ for any $n\ge1$. For any $n\ge 1$, by applying the holomorphic index formula \cite[Chapter 12]{Milnor06} to $f^{\circ n}$, we obtain
\begin{equation}\label{eqn:HoloIndex}
    \sum_{m|n}\sum_{~C {\rm\,is\,a\,repelling}\,m{\textup -}{\rm cycle}} \frac{m}{\lambda_C(f)-1}=\frac{1-|\lambda_{att}(f)^n|^2}{|1-\lambda_{att}(f)^n|^2}.
\end{equation}

\subsection{Matings of Blaschke products}\label{sec_2.2}
This section summarizes the results and proofs in \cite{McM_simul}.
Let $\mathcal{C}$ be a Jordan curve in $\mathbb P^1$, and let $U$ and $V$ be the two components of $\mathbb P^1 \setminus \mathcal{C}$. Then there exist biholomorphisms $g_1 \colon \Delta \to U$ and $g_2 \colon 1/\Delta \to V$, 
which continuously extend to the boundary circle $S^1=\partial{\Delta}=\partial(1/\Delta)$. The {\it conformal welding} of $\mathcal{C}$ is the circle homeomorphism $h_{\mathcal{C}}:= g_2^{-1}\circ g_1|_{S^1} \colon S^1 \to S^1$, which is unique up to pre- and post-compositions with ${\rm Aut}(\Delta)$.

\begin{lem} \label{lem_an_thm3}
For any quasisymmetry $h \colon S^1 \to S^1$, there exists a quasiconformal map $\Psi \colon \mathbb{P}^1 \to \mathbb{P}^1$ such that $h = h_{\mathcal{C}}$ where
$\mathcal{C} = \Psi(S^1)$ is a quasi-circle.
\end{lem}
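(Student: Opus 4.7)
The plan is to realize $\mathcal{C}$ as the image of $S^1$ under a globally quasiconformal self-map $\Psi$ of $\mathbb{P}^1$ whose Beltrami coefficient encodes a chosen quasiconformal extension of $h$ into one of the two disks. By the Beurling--Ahlfors extension theorem, any quasi-symmetry $h \colon S^1 \to S^1$ admits a quasiconformal extension $H \colon 1/\Delta \to 1/\Delta$ with $H|_{S^1} = h$. I then define a Beltrami differential $\mu$ on $\mathbb{P}^1$ by setting $\mu := \mu_H = \partial_{\bar z}H/\partial_z H$ on $1/\Delta$ and $\mu := 0$ on $\Delta$; since $H$ is quasiconformal, $\|\mu\|_\infty < 1$.

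Next, by the measurable Riemann mapping theorem, there is a quasiconformal homeomorphism $\Psi \colon \mathbb{P}^1 \to \mathbb{P}^1$ solving $\overline{\partial}\Psi = \mu\cdot \partial \Psi$, normalized by fixing three convenient points. Since $\mu \equiv 0$ on $\Delta$, the restriction $g_1 := \Psi|_{\Delta}$ is a biholomorphism onto $U := \Psi(\Delta)$. On $1/\Delta$, the Beltrami coefficient of $\Psi \circ H^{-1}$ vanishes because $\Psi$ and $H$ share the same dilatation there, so $g_2 := (\Psi \circ H^{-1})|_{1/\Delta}$ is a biholomorphism onto $V := \Psi(1/\Delta)$. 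Both $g_1$ and $g_2$ extend continuously to $S^1$ by the standard boundary behavior of quasiconformal maps.

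Setting $\mathcal{C} := \Psi(S^1)$, I then compute the conformal welding directly:
\[
g_2^{-1}\circ g_1|_{S^1} = (H\circ \Psi^{-1})\circ \Psi|_{S^1} = H|_{S^1} = h,
\]
so $h_{\mathcal{C}} = h$. The curve $\mathcal{C}$ is the image of a round circle under a global quasiconformal self-map of $\mathbb{P}^1$ and hence is a quasi-circle by Ahlfors's characterization.

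The heavy-lifting inputs are the Beurling--Ahlfors extension and the measurable Riemann mapping theorem; the remaining composition identities are routine. The only point where one has to be careful is a directional convention: depending on which of the two disks one chooses to extend $h$ into, the analogous construction can produce $h^{-1}$ instead of $h$ as the welding. Extending into $1/\Delta$ and using $g_2 = \Psi \circ H^{-1}$ (rather than $g_1$) to absorb the dilatation gives $h$ on the nose, as shown above.
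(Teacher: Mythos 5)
Your proof is correct and follows essentially the same route as the paper: extend $h$ quasiconformally into one of the disks, transplant its dilatation (extended by zero) via the measurable Riemann mapping theorem, and verify the welding by a direct composition identity. The only difference is a convention mirror image — you extend into $1/\Delta$ and use $\mu_H$, absorbing the dilatation through $g_2=\Psi\circ H^{-1}$, whereas the paper extends into $\Delta$ and uses the dilatation of $h^{-1}$ so that $\Psi\circ h$ is the conformal map on $\Delta$ — and both yield $h$ as the welding.
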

\begin{proof}
The statement is proven in \cite[Proof of Theorem 3]{McM_simul} as follows. We first observe that $h$ extends to a quasiconformal map $h \colon \overline{\Delta} \to \overline{\Delta}$ of the closed unit disk. Let $\mu$ be the complex dilatation of $h^{-1}$ and extend $\mu$ to be 0 on $1/\Delta$. It then follows from the measurable Riemann mapping theorem that there exists a quasiconformal map $\Psi \from \mathbb{P}^1 \to \mathbb{P}^1$ such that $\partial_{\bar{z}}\Psi/\partial_z \Psi = \mu$ almost everywhere. Then $\Psi(S^1)$ is a quasi-circle. See Figure \ref{fig:McMSUnif}. To see that $h_{\Psi(S^1)}$ is equal to $h$, we note that by construction, $\Psi$ maps $\mathbb{P}^1\setminus \Delta$ conformally onto one of the component of $\mathbb{P}^1\setminus \Psi(S^1)$ and $\Psi \circ h$ maps $\Delta$ conformally onto the other. Therefore the boundary correspondence is $(\Psi)^{-1}(\Psi \circ h) = h$.
\end{proof}
\begin{figure}[h!]
	\centering
	\def\svgwidth{0.8\textwidth}
	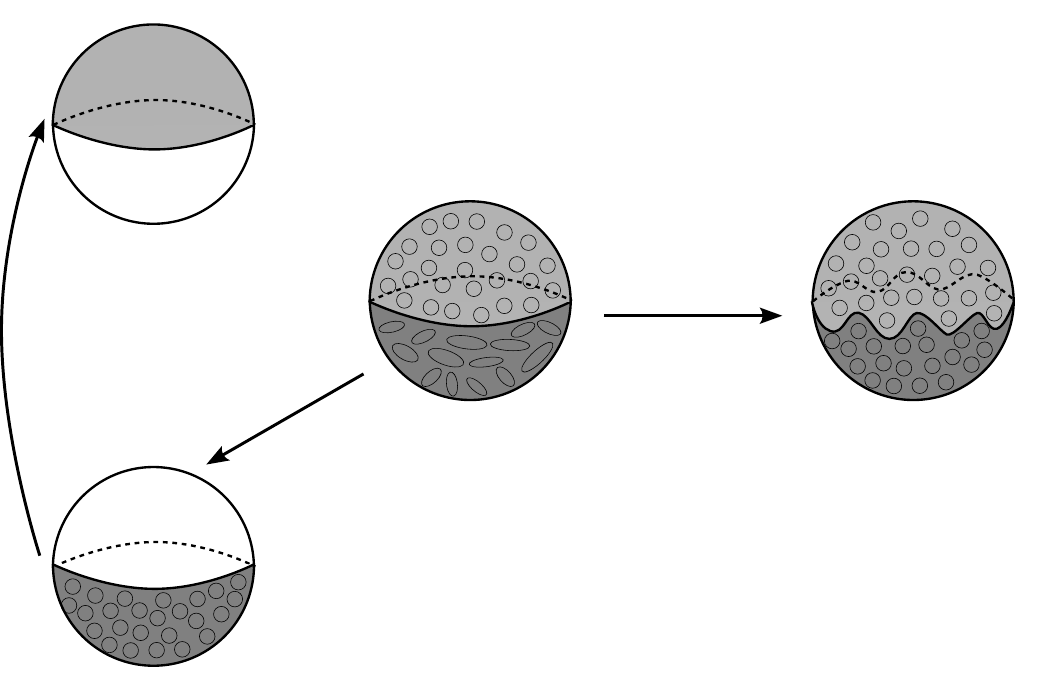
	\caption{${\rm Mate}\from \Bcal_d^{fm}\times \Bcal_d^{fm}\to \QB_d^{fm}$}
    \label{fig:McMSUnif}
\end{figure}  

\begin{rem} [\cite{McM_simul}] \label{rem_bijection}
There is a bijection between the following two sets of equivalence classes:
$$\left\{\text{quasisymmetries $h\from S^1 \to S^1$}\right\}/\left\{h \sim A\circ h\circ B : A,B\in {\rm Aut}(\Delta)\right\} $$ and 
$$\left\{\text{quasi-circles $\mathcal{C}$}\right\}/\left\{\mathcal{C} \sim A(\mathcal{C}): A \in {\rm Aut}(\mathbb{P}^1)\right\}.$$
\end{rem}

Now let us discuss the {\it mating} of two Blaschke products with marked fixed points. Suppose $[f_1],[f_2]\in \Bcal_d^{fm}$ such that $f_1$ and $f_2$ are standard representatives. Then there is a quasisymmetric homeomorphism $h \colon S^1 \to S^1$ such that $h \circ f_1 = f_2\circ h$ on $S^1$ with $h(1)=1$. Consider $\Psi$ defined in Lemma \ref{lem_an_thm3}. See also Figure \ref{fig:McMSUnif}. We can normalize $\Psi$ such that it fixes $0,1$, and $\infty$.

Consider the quasi-circle $\Psi(S^1)$ in $\mathbb{P}^1$. Denote by $U$ and $V$ the components of $\mathbb{P}^1\setminus \Psi(S^1)$ such that $0\in U$ and $\infty \in V$. Define a map $F\from \mathbb{P}^1\setminus \Psi(S^1)\to \mathbb{P}^1 \setminus \Psi(S^1)$ by
\begin{equation*}
    F(z)=\left\{
    \begin{array}{cc}
        \left((h^{-1} \circ \Psi^{-1})^{-1} \circ  f_1 \circ (h^{-1} \circ \Psi^{-1})\right)(z) & z\in U\vspace{5pt}\\
        \left(\Psi \circ  f_2 \circ (\Psi^{-1})\right)(z) & z\in V.
    \end{array}
    \right.
\end{equation*}
It is easy to check that $F$ continuously extends to $\mathbb{P}^1$. Since $\Psi(S^1)$ is a quasi-circle, it has measure zero. Therefore, $F$ is a quasi-regular map that is holomorphic almost everywhere, which implies $F$ is a rational map. The Julia set $\Julia(F)$ of $F$ is $\Psi(S^1)$. We emphasize that $\Psi\circ h|_\Delta$ holomorphically conjugates $f_1|_\Delta$ to $F|_U$, and $\Psi \circ {\rm id}|_{1/\Delta}$ holomorphically conjugates $f_2|_{1/\Delta}$ to $F|_V$. We call $F$ the {\it mating} of $f_1$ and $f_2$, and we denote $[F]\in \QB_d^{fm}$ by ${\rm Mate}([f_1],[f_2])$. It gives rises to a map ${\rm Mate}\from \Bcal_d^{fm}\times \Bcal_d^{fm}\to \QB_d^{fm}$.

\begin{lem}\label{lem_smooth_fam}
Let $([f_t])_{t \in (-1,1)}$ be a smooth family in $\Bcal_d^{fm}$ and $[g] \in \Bcal_d^{fm}$. Then ${\rm Mate}([f_t],[g])$ is smooth in $t$.
\end{lem}
\begin{proof}
Suppose that $([f_t])_{t \in (-1,1)}$ is a smooth family in $\Bcal_d^{fm}$ and $[g] \in \Bcal_d^{fm}$ such that $f_t$ and $g$ are standard representatives. Let $F_t$ be the standard representative of ${\rm Mate}([f_t],[g])$ for $t\in (-1,1)$. We want to show that $F_t$ is smooth in $t$.

Let us first show that $h_t\from S^1\to S^1$ is smooth in $t$, where $h_t \colon \Julia(g)\to \Julia(f_t)$ is the quasisymmetry conjugating the dynamics of $g$ and $f_t$ as well as their markings of the fixed points. Denote by $\widetilde{f}_t \colon \mathbb{R} \to \mathbb{R}$ and $\widetilde{g}\colon \mathbb{R} \to \mathbb{R}$ the lifts of $f_t \colon S^1 \to S^1$ and $g\colon S^1 \to S^1$ to the universal cover $\mathbb R$ of $S^1$. Normalize them so that $\widetilde{f}_t(0) = \widetilde{g}(0)=0$. By \cite[Proof of Lemma 4]{McM_simul}, the lift $\widetilde{h}_t \colon \mathbb{R} \to \mathbb{R}$ of the conjugating map $h_t \colon \Julia(g) \to \Julia(f_t)$ is given by
\begin{equation}\label{eq_def_htilde}
\widetilde{h}_t(x) := \lim_{n \to \infty} \widetilde{f}_t^{-n} \circ \widetilde{g}^n(x), \qquad x \in \mathbb{R}.
\end{equation}
It follows from \eqref{eq_def_htilde} that $\widetilde{h}_t \colon \mathbb{R} \to \mathbb{R}$ is smooth in $t$. Therefore $h_t \colon S^1 \to S^1$ is smooth in $t$. 

Next, we extend $(h_t\from S^1\to S^1)$ to a smooth family of quasiconformal maps $(h_t\from \Delta\to \Delta)$. Let $\widehat{h}_t :=\alpha \circ h_t \circ \alpha^{-1} \colon \mathbb{R} \to \mathbb{R}$ for a M\"obius transformation $\alpha \colon \Delta \to \mathbb{H}$.
We use the construction in \cite[Proof of Theorem 4.9.5]{Hub_vol1} to obtain a quasiconformal extension $\widehat{h}_t \colon \mathbb{H} \to \mathbb{H}$, which is smooth in $t$. Therefore $h_t =\alpha^{-1} \circ \widehat{h}_t \circ \alpha \colon \overline{\Delta} \to \overline{\Delta}$ is smooth in $t$. Then, by the construction of the map ${\rm Mate}$, we obtain the smooth dependence of ${\rm Mate}([f_t],[g])$ on $t$.
\end{proof}

\begin{rem}
    At this point, we do not know whether the map ${\rm Mate}\from \Bcal_d^{fm} \times \Bcal_d^{fm} \to \QB_d^{fm}$ is smooth. Coordinate-wise differentiability does not, in general, imply differentiability with respect to real variables. However, for complex variables this implication does hold by Hartogs’ theorem, which is used in the proofs of Theorems \ref{main theorem 1} and \ref{thm_simul}.
\end{rem}

\subsection{McMullen's Weil--Petersson semi-norm} \label{sec_Curt's_metric}

Suppose that $([f_t])_{t \in (-1,1)}$ is a smooth path in $\mathcal{B}_d^{fm}$ such that $f_t$ is the standard representative of $[f_t]$.
For each $t \in (-1,1)$, denote by $[F_t] := {\rm Mate}([f_0],[f_t])$ such that $F_t$ is the standard representative of $[F_t]$. Then, $F_t \colon \mathbb{P}^1 \to \mathbb{P}^1$ is a smooth family of rational maps by Lemma \ref{lem_smooth_fam} with $F_0=f_0$.

There is a unique smooth family of conformal maps
$$H_t\from \Delta \to \mathbb{P}^1$$
satisfying $H_t \circ F_0=F_t\circ H_t$ and $H_0(z) = z$. Each $H_t$ extends to a quasiconformal map on $\mathbb{P}^1$, sending $S^1$ to the Julia set $\mathcal{J}(F_t)$ of $F_t$.

For any tangent vector $\Vec{v} := d/dt|_{t=0}[f_t] \in T_{[f_0]}\Bcal_d^{fm}$, we define a holomorphic vector field $\eta_{\Vec{v}}$ on $\Delta$ by
\begin{equation}\label{def_eta_vf}
    \eta_{\Vec{v}}(z) := \frac{d}{dt}\bigg|_{t=0}H_t(z).
\end{equation}
McMullen defined the {\it Weil--Petersson semi-norm} $||\cdot ||_{WP}$ on $\Bcal_d^{fm}$ by
\begin{equation}\label{eq_def_metric}
||\Vec{v}||_{WP} := \lim_{r\to1} \frac{1}{4\pi|\log(1-r)|}\int_{|z|=r} |\eta_{\Vec{v}}'(z)|^2|dz|;
\end{equation}
see \cite[Theorem 1.7]{McMullen08}.

We denote by $||\cdot||_{\mathcal{P}}$ the {\it pressure semi-norm} in $\Bcal_d^{fm}$ defined in \cite[Section 2.2]{HLP23}.

\begin{thm}[{\cite[Theorems 1.6, 1.7, and 2.6]{McMullen08}}]\label{thm:McMullen4}
Let $([f_t])_{t\in(-1,1)}$ be a smooth path in $\Bcal_d^{fm}$. For $\Vec{v} := \left.\frac{d}{dt}\right|_{t=0}[f_t] \in T_{[f_0]}\Bcal_d^{fm}$, we have
\[
    ||\Vec{v}||_{WP} = \frac{1}{2} || \Vec{v}||_\Pcal.
\]
\end{thm}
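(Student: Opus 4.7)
The plan is to identify both semi-norms with the same Sobolev quadratic form on the boundary trace of $\eta_{\Vec{v}}$ along $S^1$, and then pin down the normalization constant. The bridge between $\eta_{\Vec{v}}$ on $\Delta$ and the dynamics of $f_0$ on $S^1$ comes from differentiating the conjugacy relation $H_t\circ F_0=F_t\circ H_t$ at $t=0$, which yields the infinitesimal conjugacy equation
\[
    \eta_{\Vec{v}}(f_0(z))-f_0'(z)\,\eta_{\Vec{v}}(z)=\dot F_{\Vec{v}}(z),\qquad \dot F_{\Vec{v}}:=\tfrac{d}{dt}\big|_{t=0}F_t.
\]
Taking the real part of the logarithmic derivative on $S^1$ expresses the variation $\dot\varphi_{\Vec{v}}:=\partial_t\log|f_t'|\big|_{t=0}$ of the geometric potential as a coboundary for $f_0|_{S^1}$ in the boundary tangent field $u(e^{i\theta}):=\mathrm{Im}\bigl(e^{-i\theta}\eta_{\Vec{v}}(e^{i\theta})\bigr)$.

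For the WP side, I would expand $\eta_{\Vec{v}}(z)=\sum_{n\ge0}a_n z^n$ and compute $\int_{|z|=r}|\eta_{\Vec{v}}'(z)|^2|dz|=2\pi r\sum_{n\ge1}n^2|a_n|^2 r^{2n-2}$. Using the boundary regularity of $u$ inherited from hyperbolicity of $f_0$, together with the standard asymptotic $\sum_{n\ge1} n^{-1}r^{2n}\sim\tfrac{1}{2}|\log(1-r)|$ as $r\to 1^-$, the prefactor $1/(4\pi|\log(1-r)|)$ in \eqref{eq_def_metric} extracts an explicit Sobolev quadratic form $Q(u,u)$ in the Fourier coefficients of $u$. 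This reduces the WP semi-norm to an intrinsic Sobolev-type norm of $u$.

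For the pressure side, I would invoke the dynamical characterization of $\|\Vec{v}\|_{\mathcal P}^2$ as an asymptotic variance of Birkhoff sums of $\dot\varphi_{\Vec{v}}$ against the measure of maximal entropy of $f_0|_{S^1}$. Using the coboundary relation above, combined with invariance of asymptotic variance under coboundaries, the variance reduces to a Fourier expression in $u$ that coincides with $Q(u,u)$ up to a multiplicative constant, thereby giving proportionality of the two semi-norms. The main obstacle is pinning down this constant to be exactly $\tfrac12$: it requires reconciling the $r\to 1^-$ asymptotics of a Hardy-space integral on one side with the spectral asymptotics of the Ruelle transfer operator for $f_0|_{S^1}$ on the other, and carefully tracking how the factor $1/(4\pi)$ in the WP definition interacts with the normalization of the equilibrium state defining $\|\cdot\|_{\mathcal P}$. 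McMullen's \cite{McMullen08} carries out both computations carefully, and my proof would follow that outline to conclude the factor $\tfrac12$.
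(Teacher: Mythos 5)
The first thing to note is that the paper does not prove this statement at all: it is imported verbatim from McMullen, with the citation to \cite[Theorems 1.6, 1.7 and 2.6]{McMullen08} serving as the proof. So your ultimate reliance on McMullen is consistent with how the paper treats the result. Judged as a proof attempt, however, your write-up has a genuine gap, and it sits exactly where all the content of the statement lies. What you outline would at best give proportionality of the two quadratic forms; the theorem asserts equality with the specific constant $\tfrac12$, and your argument never produces that constant --- you explicitly defer it (``McMullen's \cite{McMullen08} carries out both computations carefully, and my proof would follow that outline''). Reconciling the $r\to1^-$ asymptotics of the Hardy-space integral in \eqref{eq_def_metric} (including the $1/(4\pi)$ normalization) with the variance normalization defining $\|\cdot\|_{\mathcal P}$ is precisely the substance of McMullen's Theorems 1.7 and 2.6; a sketch that ends by citing the source for this computation is a citation, not a proof.

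Two intermediate steps are also stated more loosely than they can be carried out. First, $\eta_{\vec v}$ is defined through the mating family $F_t={\rm Mate}([f_0],[f_t])$, not through $f_t$ itself: differentiating $H_t\circ F_0=F_t\circ H_t$ gives $\eta_{\vec v}(f_0(z))-f_0'(z)\,\eta_{\vec v}(z)=\dot F_{\vec v}(z)$ with $\dot F_{\vec v}$ the derivative of the \emph{mating} family, and passing from this to the variation $\partial_t\log|f_t'|$ on $S^1$ uses the one-sided (holomorphic on $\Delta$, unchanged dynamics there) structure of the deformation; it is not a direct coboundary identity for $f_0|_{S^1}$, and ``invariance of variance under coboundaries'' does not by itself convert the variance of $\partial_t\log|f_t'|$ into the boundary Sobolev form of $u$ --- that conversion is again the content of the cited theorems. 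Second, the pressure semi-norm of \cite{HeNie_MetricHypComp,HLP23} is built from the asymptotic variance with respect to the equilibrium state of $-\delta\log|f'|$ (for Blaschke products, $\delta=1$, the invariant measure in the Lebesgue class on $S^1$), not the measure of maximal entropy as you state; the two coincide only for $z\mapsto z^d$, so as written the identification of the two quadratic forms would be made against the wrong measure.
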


\begin{lem}\label{lem_degen}
Let $([f_t])_{t\in(-1,1)}$ be a smooth path in $\Bcal_d^{fm}$  such that $|| \left.\frac{d}{d t}\right|_{t=0} [f_{t}] ||_{WP}=0.$ Then, for any repelling $n$-cycle $C$ with $n\ge1$, we have
$$\left.\frac{d}{d t}\right|_{t=0} \lambda_C([f_t])=0.$$
\end{lem}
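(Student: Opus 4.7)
My plan is to reduce the statement to a Livsic-type telescoping identity. By Theorem \ref{thm:McMullen4}, $||\vec v||_{WP}=0$ is equivalent to $||\vec v||_\mathcal{P}=0$, and I will show that the latter forces the infinitesimal variation of the geometric potential to be a dynamical coboundary over $(S^1,f_0)$; sums along cycles then telescope to zero.

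Setting up the family, since $(f_t)_{t\in(-1,1)}$ is a smooth family of hyperbolic Blaschke products with marked fixed points, structural stability provides a smooth family of orientation-preserving quasisymmetric homeomorphisms $h_t \from S^1 \to S^1$ with $h_0=\mathrm{id}$, $h_t \circ f_0 = f_t \circ h_t$, and compatible with the fixed-point markings. Define the geometric potential $\phi_t(z) := \log|f_t'(h_t(z))|$ on $S^1$ and its infinitesimal variation $\dot\phi := \tfrac{d}{dt}\big|_{t=0}\phi_t$, a H\"older-continuous function. Because a Blaschke product preserves $S^1$ and is orientation-preserving there, a short direct computation shows that the multiplier along the cycle $h_t(C) = \{h_t(z_1),\dots,h_t(z_n)\}$ is the positive real number $\lambda_C(f_t) = \prod_{i=1}^n |f_t'(h_t(z_i))|$. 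Differentiating,
\begin{equation*}
    \frac{d}{dt}\bigg|_{t=0}\log\lambda_C(f_t)\;=\;\sum_{i=1}^{n}\dot\phi(z_i),
\end{equation*}
so it suffices to show this Birkhoff sum vanishes.

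Interpreting the hypothesis cohomologically, by the construction in \cite[Section 2.2]{HLP23} the pressure semi-norm on $\Bcal_d^{fm}$ is (a positive multiple of) the asymptotic variance of $\dot\phi$ with respect to the equilibrium state of $-\log|f_0'|$ on $S^1$ — which, since $\dim_H \mathcal{J}(f_0)=1$ for Blaschke products, is the Lebesgue measure on $S^1$. By the classical variance-coboundary dichotomy for H\"older potentials over uniformly hyperbolic systems, this asymptotic variance vanishes if and only if $\dot\phi$ is cohomologous to a constant: there exist $c\in\mathbb{R}$ and a H\"older $\psi\from S^1 \to \mathbb{R}$ with $\dot\phi = c + \psi\circ f_0 - \psi$. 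The constant vanishes because the identity $P(f_t,-\log|f_t'|)\equiv 0$ — a consequence of $\dim_H \mathcal{J}(f_t)\equiv 1$ — differentiates at $t=0$ to yield $\int_{S^1}\dot\phi\,d\mu_0 = 0$. Hence $\dot\phi = \psi\circ f_0 - \psi$.

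Summing along the $n$-cycle telescopes:
\begin{equation*}
    \sum_{i=1}^{n}\dot\phi(z_i)\;=\;\sum_{i=1}^{n}\bigl(\psi(f_0(z_i))-\psi(z_i)\bigr)\;=\;0,
\end{equation*}
which, combined with the first step, gives $\tfrac{d}{dt}|_{t=0}\lambda_C(f_t) = \lambda_C(f_0)\cdot 0 = 0$, as required. I expect the main point requiring care to be the cohomological characterization: recognizing $||\cdot||_\mathcal{P}$ as an asymptotic variance and pinning down $c=0$ via the pressure identity. Both facts belong to the standard thermodynamic-formalism package and are encoded in the definition of $||\cdot||_\mathcal{P}$ in \cite{HLP23}.
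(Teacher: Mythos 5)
Your first step is exactly the paper's: Theorem \ref{thm:McMullen4} converts $||\vec{v}||_{WP}=0$ into $||\vec{v}||_{\mathcal{P}}=0$. Where the paper then simply cites \cite[Proposition 2.9]{HLP23} (or \cite[Corollary 4.3]{HeNie_MetricHypComp}) for the implication ``pressure semi-norm zero implies all marked multiplier derivatives vanish,'' you reprove that implication from scratch with the standard thermodynamic package: vanishing asymptotic variance forces $\dot\phi$ to be cohomologous to a constant, the constant is killed by differentiating the identity $P(f_t,-\log|f_t'|)\equiv 0$, and Birkhoff sums along cycles telescope. That argument is correct (the multipliers of circle cycles of a Blaschke product are indeed positive reals, so passing through $\log\lambda_C$ is legitimate), and it is essentially the content of the results the paper cites, so your proof is a self-contained unpacking of the black box rather than a genuinely different route; what it buys is independence from \cite{HLP23} and \cite{HeNie_MetricHypComp}, at the cost of assuming their definition of $||\cdot||_{\mathcal{P}}$ as (a positive multiple of) the variance of $\dot\phi$, which the present paper deliberately does not restate. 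Two small inaccuracies worth fixing: the equilibrium state of $-\log|f_0'|$ on $S^1$ is the unique $f_0$-invariant measure \emph{equivalent} to Lebesgue (Lebesgue itself is generally not invariant), and the ``positive multiple of the variance'' statement holds for the squared semi-norm with a normalization by the Lyapunov exponent; neither affects the equivalence ``semi-norm zero if and only if variance zero'' that you actually use.
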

\begin{proof}
Suppose $|| \left.\frac{d}{d t}\right|_{t=0} [f_t] ||_{WP}=0.$ By Theorem \ref{thm:McMullen4}, $|| \left.\frac{d}{d t}\right|_{t=0} [f_t]||_{\mathcal{P}} = 0$. The conclusion then follows from \cite[Proposition 2.9]{HLP23} or \cite[Corollary 4.3]{HeNie_MetricHypComp}.
\end{proof}

Lemma \ref{lem_degen} is the only statement in this article that requires the pressure semi-norm $||\cdot||_{\mathcal{P}}$. Therefore, we omit further details of $||\cdot||_{\mathcal{P}}$ and refer the reader to \cite{HeNie_MetricHypComp,HLP23} for a comprehensive discussion.

\section{Simultaneous uniformization of quasi-Blaschke products} \label{sec_pf_1.1}
The goal of this section is to prove Theorem \ref{main theorem 1}---holomorphic simultaneous uniformization for Blaschke products. 
To this end, in Section \ref{sec_simul_central}, we first uniformize any quasi-Blaschke product into a pair of polynomials in the central component $\mathcal{H}_d^{fm}$ of ${\rm poly}_d^{fm}$; see Theorem \ref{thm_simul}. In Section \ref{sec_holo_simul_BD}, using Theorem \ref{thm_simul}, we define a complex structure on $\Bcal^{fm}_d$ and prove Theorem \ref{main theorem 1}. In Lemma \ref{lem:Equiv Btw Mating Maps}, we also show that our mating map $\mathcal{U}\from \Bcal^{fm}_d \times \overline{\Bcal^{fm}_d} \to \QB^{fm}_d$ coincides with the map ${\rm Mate}$ by McMullen.

\subsection{Simultaneous uniformization with central components of polynomials}\label{sec_simul_central}
Let ${\rm poly}_d^{fm}$ be the subset of ${\rm rat}_d^{fm}$ consisting of equivalence classes of fixed-point-marked degree-$d$ polynomials.
We denote by $\mathcal{H}_d^{fm}$ the central hyperbolic component in ${\rm poly}_d^{fm}$, i.e., the hyperbolic component in ${\rm poly}_d^{fm}$ containing $[(z^d;0,\infty,1,\zeta,\dots,\zeta^{d-2})]$, where $\zeta=e^{2\pi i/(d-1)}$.
We note that $\mathcal{H}_d^{fm}$ is a complex manifold; see \cite[p.\@ 16]{HLP23} or \cite[Lemma 9.2]{Milnor_HypComp}.

For any complex manifold $X$, we denote by $\overline{X}$ the complex manifold obtained by conjugating the complex structure of $X$. We denote by ${\rm Diag}$ the {\it diagonal} of $\mathcal{H}_d^{fm} \times \overline{\mathcal{H}_d^{fm}}$; namely,
$${\rm Diag} :=\{([f],[f]):[f]\in\mathcal{H}_d^{fm}\} \subset \mathcal{H}_d^{fm} \times \overline{\mathcal{H}_d^{fm}}.$$ 

\begin{rem}
    Let $X$ be a Riemann surface of genus $g$. In Bers' simultaneous uniformization $ \mathcal{T}(X)\times \mathcal{T}(\overline{X}) \to \mathcal{QF}_g$, the skew-diagonal $(Y,\overline{Y})$ maps to the Fuchsian locus. Here, if $Y=\mathbb{H}/\Gamma$ with $\Gamma \le \mathrm{PSL}(2,\mathbb{R})$, then $\overline{Y} = \overline{\mathbb{H}} / \overline{\Gamma}$. In complex dynamics, however, Blaschke products are already symmetric under conjugation along the unit circle; that is, $B(1/\overline{z}) = 1/\overline{B(z)}$.
    Therefore, the diagonal, rather than the skew-diagonal, of  $\mathcal{H}_d^{fm} \times \overline{\mathcal{H}_d^{fm}}$ maps to the Blaschke locus.
\end{rem}

\begin{thm}[Simultaneous uniformization with central components of polynomials]\label{thm_simul}
    There is a biholomorphism
    \[
        \Theta \from \mathcal{H}_d^{fm} \times \overline{\mathcal{H}_d^{fm}} \to \mathcal{QB}_d^{fm}
    \]
    such that the following hold.
    \begin{enumerate}
        \item Suppose that $F$ is a rational map in the equivalence class $\Theta([f],[g])$ and $U$ and $V$ are the two Fatou components of $F$. Then, $F|_U$ is holomorphically conjugate to $f|_{{\rm int}(K(f))}$ and $F|_V$ is anti-holomorphically conjugate to $g|_{{\rm int}(K(g))}$, where $K(f)$ and $K(g)$ are the filled Julia sets of $f$ and $g$, respectively.
        \item The restriction $\Theta|_{\rm Diag}$ to the diagonal is a diffeomorphism between ${\rm Diag}$ and $\mathcal{B}_d^{fm}$.
    \end{enumerate}
\end{thm}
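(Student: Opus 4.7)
The plan is to prove Theorem \ref{thm_simul} by combining McMullen's mating construction (Section \ref{sec_2.2}) with the natural correspondence between polynomials in $\mathcal{H}_d^{fm}$ and their interior Blaschke dynamics, and then promoting smoothness to holomorphicity via Ahlfors--Bers. For $[f] \in \mathcal{H}_d^{fm}$ with standard representative $f$, the hyperbolicity and connectedness of $K(f)$ give a Riemann map $\Delta \to \mathrm{int}(K(f))$ that conjugates $f|_{\mathrm{int}(K(f))}$ to a Blaschke product $B_f$, well-defined in $\mathcal{B}_d^{fm}$ once the fixed-point marking of $f$ is transferred to the repelling fixed points of $B_f$ on $S^1$; similarly for $g$. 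I then set $\Theta([f],[g]) := \mathrm{Mate}([B_f],[B_g])$ using the mating map of Section \ref{sec_2.2}. By the construction of Mate, the resulting quasi-Blaschke product $F$ has Fatou components $U,V$ with $F|_U$ holomorphically conjugate to $B_f|_\Delta \cong f|_{\mathrm{int}(K(f))}$ and $F|_V$ holomorphically conjugate to $B_g|_{1/\Delta}$. The Blaschke symmetry $B_g(1/\bar z) = 1/\overline{B_g(z)}$ makes $B_g|_{1/\Delta}$ anti-holomorphically conjugate to $B_g|_\Delta \cong g|_{\mathrm{int}(K(g))}$, giving the required conjugacy on $V$.

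For bijectivity, given $[F] \in \mathcal{QB}_d^{fm}$, the restrictions $F|_U$ and $F|_V$ are proper degree-$d$ self-maps of disks, biholomorphic (resp.\@ anti-biholomorphic) to Blaschke products $[B_1],[B_2]\in\mathcal{B}_d^{fm}$. Each straightens to a polynomial in $\mathcal{H}_d^{fm}$ by the standard quasiconformal surgery, namely extending the Blaschke dynamics to $\mathbb{P}^1$ by $z^d$ on $1/\Delta$ through a quasiconformal interpolation along $S^1$ and then applying Ahlfors--Bers; this yields the inverse $\Theta^{-1}([F]) = ([f],[g])$, and $\Theta \circ \Theta^{-1} = \mathrm{id}$ follows by tracking conjugacies on Fatou components.

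The main technical obstacle is the holomorphicity of $\Theta$. For a holomorphic family $([f_t],[g_t])_{t\in\mathbb{D}}$ in $\mathcal{H}_d^{fm}\times\overline{\mathcal{H}_d^{fm}}$, I encode the mating as a holomorphic family of $z^d$-invariant Beltrami differentials $\mu_t$ on $\mathbb{P}^1$; by Ahlfors--Bers, the solutions $\Psi_t$ normalized by $\Psi_t(0)=0,\Psi_t(1)=1,\Psi_t(\infty)=\infty$ depend holomorphically on $t$, and so does $F_t := \Psi_t \circ z^d \circ \Psi_t^{-1}$ as a family of rational maps. The subtle point is promoting this to holomorphic dependence of $[F_t]$ in the fixed-point-marked moduli space $\mathcal{QB}_d^{fm}$. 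In Bers' quasi-Fuchsian setting, $\dim_\mathbb{C}\PSL(2,\mathbb{C})=3$ means three orbit points suffice; here $\dim_\mathbb{C}\mathcal{QB}_d^{fm} = 2d-1$, so following the approach outlined in the introduction I would track the $d-1$ repelling fixed points of $z^d$ on $S^1$ together with the $d$ preimages under $z^d$ of one of them. The $\Psi_t$-images of these $2d-1$ points depend holomorphically on $t$ by Ahlfors--Bers, and they furnish local holomorphic coordinates on $\mathcal{QB}_d^{fm}$ near $[F_t]$, yielding the holomorphicity of $\Theta$.

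For the diagonal, when $[f]=[g]$, we have $[B_f]=[B_g]$, and the mating construction with the identity conjugating homeomorphism $h=\mathrm{id}$ yields $\mathrm{Mate}([B_f],[B_f]) = [B_f]$, hence $\Theta([f],[f]) = [B_f] \in \mathcal{B}_d^{fm}$. Thus $\Theta|_{\mathrm{Diag}}$ is identified with the natural map $\mathcal{H}_d^{fm} \to \mathcal{B}_d^{fm}$, $[f]\mapsto[B_f]$, whose smoothness comes from smooth dependence of Riemann maps on parameters and whose bijectivity with smooth inverse comes from the QC straightening used in the bijectivity step, yielding the asserted diffeomorphism in part (2).
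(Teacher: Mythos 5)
Your overall architecture (mating/quasiconformal surgery to define $\Theta$, straightening the two Fatou restrictions to define $\Theta^{-1}$, the $1/\bar z$-symmetry for the diagonal) is the same as the paper's, and those parts are fine in outline. The genuine gaps are in the holomorphicity argument, which is the actual content of the theorem. First, you never identify the source of holomorphic dependence of the Beltrami data on the parameter. You assert that the mating of $(f_t,g_t)$ can be "encoded as a holomorphic family of $z^d$-invariant Beltrami differentials $\mu_t$", but producing such a family requires a global quasiconformal conjugacy from $z^d$ to the mated dynamics varying holomorphically in $t$, and nothing in your construction supplies it: the welding/Douady--Earle extensions you invoke elsewhere (as in Lemma \ref{lem_smooth_fam}) only give \emph{smooth} dependence, and the mating construction itself only uses conjugacies near the Julia set, so its natural Beltrami coefficient is not $z^d$-invariant. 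The paper gets holomorphic dependence from a different source: the holomorphic motion of Julia sets over the hyperbolic component $\Hcal_d^{fm}$ together with the Böttcher conjugacy on the basin of $\infty$ (the maps $\phi_f$, holomorphic on $1/\Delta$), fed into the analytic dependence of Ahlfors--Bers solutions; your proposal needs an analogous input and does not provide one.

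Second, even granting that $t\mapsto \Psi_t(z)$ is holomorphic for each $z$, your deduction that $F_t$ is a holomorphic family is not yet justified: $F_t=\Psi_t\circ z^d\circ \Psi_t^{-1}$ involves $\Psi_t^{-1}$, which does not vary holomorphically in general, so holomorphy of the coefficients of $F_t$ is exactly what must be proved, not a consequence "as a family of rational maps". You then reduce everything to the assertion that the $d-1$ Julia fixed points together with the $d$ preimages of one of them "furnish local holomorphic coordinates on $\QB_d^{fm}$" — but this is precisely the crux, stated without proof, and it is at least as hard as what it is meant to establish. The paper avoids any such coordinate claim: it writes $F_t=p_t/q_t$ with the normalization $F_t(0)=0$, $F_t(\infty)=\infty$, observes that the monic polynomials $r_t$ (roots: preimages of $1$) and $k_t$ (roots: finite fixed points) move holomorphically because their roots are $\phi_t$-images of marked points, and then solves Equation \eqref{eqn:p_t and q_t} to show the coefficients of $p_t,q_t$ (and the scalar $a_t$) are holomorphic in $t$. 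Some version of this elementary algebraic step is needed in your argument; as written, the holomorphicity of $\Theta$ rests on an unproven claim. (Minor additional points: you should also verify $\Theta^{-1}\circ\Theta=\mathrm{id}$, not only $\Theta\circ\Theta^{-1}=\mathrm{id}$, and check the anti-holomorphic dependence on the second factor; and for part (2) it is cleaner, as in the paper, to deduce the diffeomorphism property of $\Theta|_{\rm Diag}$ from the already-established biholomorphy of $\Theta$ rather than from smooth dependence of Riemann maps.)
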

\begin{proof}
{\it (Step 1) We first define the map $\Theta$.} Let $[f],[g] \in \mathcal{H}_d^{fm}$ so that $f$ and $g$ are their standard representatives. There exist quasiconformal homeomorphisms $\phi_f,\phi_g\from \mathbb{P}^1 \to \mathbb{P}^1$ so that (1) $\phi_f$ and $\phi_g$ conjugate the dynamics of $z^d$ near the circle $\partial \Delta$ to the dynamics of $f$ and $g$ near the Julia sets $\Julia(f)$ and $\Julia(g)$, respectively, (2) $\phi_f|_{1/\Delta}$ 
and $\phi_g|_{1/\Delta}$ holomorphically conjugate $z^d\from 1/\Delta\to 1/\Delta$ to $f\from K(f)^c  \to K(f)^c$ and to $g\from K(g)^c  \to K(g)^c$, respectively, and (3) $\phi_f$ and $\phi_g$ fix three points: $0$ and $\infty$ (attracting or super-attracting fixed points), and $1$ (a repelling fixed point).

Define $\mu_f:= \partial_{\bar{z}}\phi_f/\partial_{z} \phi_f$ and $\mu_g:= \partial_{\bar{z}}\phi_g/\partial_{z} \phi_g$.
We note that $\mu_f =\mu_g\equiv 0$ in $1/\Delta$. We define a mated Beltrami differential $\mu_{f *g}$ by 
    \[
        \mu_{f*g}(z):=\left\{
        \begin{array}{cc}
            \mu_f(z) & z \in \Delta\\
            \overline{\mu_g(\frac{1}{\bar{z}})}\cdot \frac{z^2}{\bar{z}^2} & z \in 1/\Delta
        \end{array}
        \right.,
    \]
where $\mu_{f*g}$ on $1/\Delta$ is defined as the pullback of $\mu_g|_\Delta$ via $z\mapsto 1/\bar{z}$.
Let $\phi_{f*g} \from \mathbb{P}^1 \to \mathbb{P}^1$ be the unique quasiconformal map, obtained by the measurable Riemann mapping theorem, that solves $\mu_{f*g}=\partial_{\bar{z}} \phi_{f*g}/\partial_z \phi_{f*g}$ and fixes $0,1,$ and $\infty$.
Let $U:=\phi_{f*g}(\Delta)$ and $V:= \phi_{f*g}(1/\Delta)$. Define $\bar{g}$ and $\phi_{\bar{g}}$ as the conjugates of $g$ and $\phi_g$ by the map $z\mapsto 1/\bar{z}$, respectively. Then, we have
\[
    \mu_{\bar{g}}(z):=\partial_{\bar{z}}\phi_{\bar{g}}/\partial_z\phi_{\bar{g}}=\overline{\mu_g(1/\bar{z})}\frac{z^2}{\bar{z}^2} ~\frac{d\bar{z}}{dz}.
\]
We define $F \from \mathbb{P}^1 \to \mathbb{P}^1$ by
    \[
        F(z):=\left\{
        \begin{array}{cc}
            (\phi_f \circ \phi_{f*g}^{-1})^{-1} \circ f \circ (\phi_f \circ \phi_{f*g}^{-1})(z) & z\in U\\
            (\phi_{\bar{g}} \circ \phi_{f*g}^{-1})^{-1} \circ \bar{g} \circ (\phi_{\bar{g}} \circ \phi_{f*g}^{-1})(z) & z\in V
        \end{array}.
        \right.
    \]
\begin{figure}[h!]
	\centering
	\def\svgwidth{0.8\textwidth}
	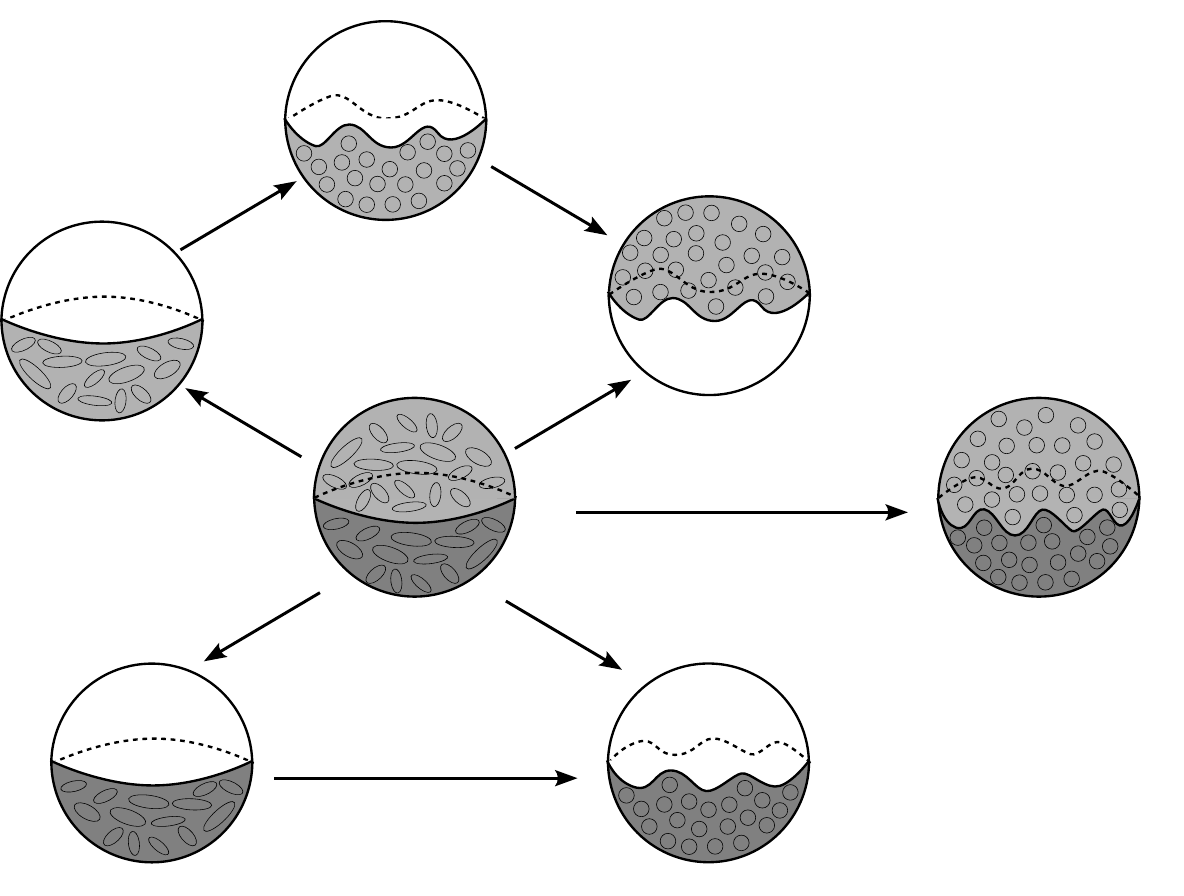
	\caption{Construction of the map $\Theta$.}
    \label{fig_1}
\end{figure}
We note that $F$ defined above continuously extend to the boundary $\partial U=\partial V$ because $\phi_f$ and $\phi_g$ also conjugate the dynamics $z^d \from S^1 \to S^1$ to $f \from \Julia(f) \to \Julia(f)$ and $g \from \Julia(g) \to \Julia(g)$ respectively.

Next, we show that $F\from \mathbb{P}^1 \to \mathbb{P}^1$ is holomorphic on $(U \cup V)$. Since $\mu_f=\mu_{f*g}$ on $\Delta$, $\phi_f$ and $\phi_{f*g}$ send the same field of ellipses on $\Delta$ to the fields of circles in $\phi_f(\Delta)={\rm int}(K(f))$ and $\phi_{f*g}(\Delta)=U$, respectively. Hence, $(\phi_f\circ \phi^{-1}_{f*g})$ sends a field of circles on $U$ to a field of circles on ${\rm int}(K(f))$. See Figure \ref{fig_1}. Hence, $\phi_f\circ \phi^{-1}_{f*g}|_U\from U \to {\rm int}(K(f))$ is a biholomorphism, and so is $\phi_f\circ \phi^{-1}_{f*g}|_V\from V \to {\rm int}(K(g))$ by the same argument.

Since $\phi_{f*g}(S^1)=(U \cup V)^c$ is a quasi-circle, it has Lebesgue measure zero. It follows that $F$ is a quasiregular map that is holomorphic almost everywhere. Therefore $F$ is a rational map.

Moreover, the Julia set $\Julia(F)$ of $F$ is the quasi-circle $\phi_{f*g}(\partial \Delta)$. 
By construction, $F$ is a quasi-Blaschke product such that $F|_U$ and $F|_V$ are holomorphically conjugate to $f|_{{\rm int}(K(f))}$ and anti-holomorphically conjugate to $g|_{{\rm int}(K(g))}$, respectively.

Let us define a marking of fixed points for $F$ as follows. By construction, $0$ and $\infty$ are attracting fixed points of $F$, and $1$ is a repelling fixed point. We define a marking of the fixed points of $F$ in such a way that $x_1=0$, $x_2=\infty$, $x_3=1$, and $x_3,x_4,\dots,x_{d+1}$ are in counter-clockwise order on $\Julia(F)$. Then $(F;x_1,x_2,\dots,x_{d+1})$ is the standard representative of $[(F;x_1,x_2,\dots,x_{d+1})]\in \QB_d^{fm}$.

We define $\Theta([f],[g]) := [(F;x_1,x_2,\dots,x_{d+1})]$.

{\it (Step 2) We show that $\Theta$ is holomorphic.} Since $\mathcal{H}_d^{fm}$ is a hyperbolic component, by the theory of holomorphic motions \cite{Lyu83typical, MSS_DynRatMap}, $\phi_f$ and $\phi_g$ have holomorphic dependence on $[f]$ and $[g]$ in $\Hcal^{fm}_d$. Moreover, by the analytic dependence of the solution on the Beltrami coefficient \cite[Theorem 4.7.4]{Hub_vol1}, $\phi_{f*g}$ depends holomorphically on $[f]$ and anti-holomorphically on $[g]$, where $[f]$ and $[g]$ are in $\Hcal^{fm}_d$.

Suppose that $(f_t)$ is a holomorphic family in $\mathcal{H}_d^{fm}$. Let $F_t$ be the standard representative of $\Theta([f_t],[g])$. Let $\phi_t$ denote the map $\phi_{f_t*g}$ constructed above. Then for any $z\in \mathbb{P}^1$, the map $t\mapsto \phi_t(z)$ is holomorphic. Moreover, $\phi_t \circ z^d = F_t \circ \phi_t$ on $S^1$.

If $z_0 \in S^1$ is a fixed point of $z^d$, then $\phi_t(z_0)$ is a fixed point of $F_t$. Also, if $z_0\in S^1$ is a $z^d$-preimage of $1\in S^1$, then $\phi_t(z_0)$ is a $F_t$-preimage of $1 \in \Julia(F_t)$. Hence the $d-1$ fixed points of $F_t$ on $\Julia(F_t)$ and the $d$ preimages of $1$ of $F_t$ change holomorphically with $t$. We use them to show $(F_t)$ is a holomorphic family as follows.

Since $F_t$ fixes $0$ and $\infty$, we can write $F_t(z)=\frac{p_t(z)}{q_t(z)}$ such that $p_t(z)$ is monic, $z~|~p_t(z)$, $\deg(p_t)=d$, and $\deg(q_t)\le d-1$. The $d-1$ fixed points on $\Julia(F_t)$ are the non-zero solutions of $p_t(z)-z q_t(z)=0$. The $d$ preimages of $1$ by $F_t$ are solutions of $p_t(z)-q_t(z)=0$. Since $p_t(z)-q_t(z)$ is monic, it is a holomorphic family of monic polynomials, say $(r_t(z))$. Also, $p_t(z)-z q_t(z)=a_t\cdot k_t(z)$, where $(k_t(z))$ is a holomorphic family of monic polynomials and $a_t\in \Cbb$. We have
\begin{align}\label{eqn:p_t and q_t}
\begin{split}
    p_t(z) & = z \frac{r_t(z)}{z-1}-a_t\frac{k_t(z)}{z-1}\\
    q_t(z) & = \frac{r_t(z)}{z-1}-a_t\frac{k_t(z)}{z-1}
\end{split},
\end{align}
where both $r_t(z)$ and $k_t(z)$ are divisible by $z-1$. By substituting $z$ with $0$ in \eqref{eqn:p_t and q_t} and using $p_t(0)=0$, we obtain that $t\mapsto a_t$ is holomorphic. Hence $(F_t)$ is a holomorphic family.

Therefore, we conclude that $([f],[g])\mapsto \Theta([f],[g])$ is holomorphic in $[f]$. Similarly, we can obtain the anti-holomorphic dependence on $[g]$. 

Hence $\Theta\from \Bcal^{fm}_d \times \overline{\Bcal^{fm}_d}\to \QB^{fm}_d$ is a holomorphic map. 

{\it (Step 3) We show that $\Theta$ is a bijective holomorphic map, and hence it is a biholomorphism.} To this end, we define a map $\Theta^{-1} \from \mathcal{QB}_d^{fm} \to \mathcal{H}_d^{fm} \times \overline{\mathcal{H}_d^{fm}}$ and show that it is the inverse of the map $\Theta$.

Suppose that $F$ is the standard representative of $[F]\in \mathcal{QB}_d^{fm}$.
There is a quasiconformal homeomorphism $\phi_F \from \mathbb{P}^1 \to \mathbb{P}^1$ that conjugates $z^d\from S^1 \to S^1$ with $F\from \Julia(F) \to \Julia(F)$. For $\mu_F:= \partial_{\bar{z}} \phi_F/ \partial_z \phi_F$, we define 
    \[
        \mu_1(z):=\left\{
        \begin{array}{cc}
            \mu_F(z) & z\in \Delta \\
            0 & z \in 1/\Delta
        \end{array}
        \right.
    \]
    and
    \[
        \mu_2(z):=\left\{
        \begin{array}{cc}
            \overline{\mu_F(\frac{1}{\bar{z}})}\frac{z^2}{\bar{z}^2} & z\in \Delta\\
            0 & z \in 1/\Delta 
        \end{array}
        \right..
    \]
For $ i\in \{1,2\}$, let $\phi_i \from \mathbb{P}^1 \to \mathbb{P}^1$ be quasiconformal homeomorphism, obtained from the measurable Riemann mapping theorem, which solves the Beltrami equation $\mu_i=\partial_{\bar{z}}\phi_i/\partial_z \phi_i$. We note that $\phi_i$ is a biholomorphism on $1/\Delta$.

For
$U_i:=\phi_i(\Delta)$ and $V_i:=\phi_i(1/\Delta)$, we define $f_i \colon \mathbb{P}^1 \to \mathbb{P}^1$ by
    \[
        f_i(z) := \left\{
        \begin{array}{cc}
            (\phi_F \circ \phi_i^{-1})^{-1} \circ F \circ (\phi_F \circ \phi_i^{-1})(z) & z\in U_i \\
            \phi_i \circ z^d \circ \phi_i^{-1}(z) & z \in V_i
        \end{array}.
        \right.
    \]
Let $U := \phi_F(\Delta)$ and $V := \phi_F(1/\Delta)$. Then $\phi_F \circ \phi_1^{-1}$ holomorphically conjugates $F\from U \to U$ to $f_1\from U_1 \to U_1$, and $\phi_1$ holomorphically conjugates $z^d\from 1/\Delta\to 1/\Delta$ to $f_1\from V_1\to V_1$.
Similarly, $\phi_F \circ \phi_2^{-1}$ anti-holomorphically conjugates $F\from V\to V$ to $f_2\from U_2\to U_2$, and $\phi_2$ holomorphically conjugates $z^d\from 1/\Delta\to 1/\Delta$ to $f_2\from V_2\to V_2$. 
    
For $i=1,2$,  $f_i$ continuously extend to $\partial U_i$ since $\phi_F \circ \phi_i^{-1}$ conjugates the dynamics $f_i \colon \partial U_i \to \partial U_i$ to $F \colon \Julia(F) \to \Julia(F)$. Moreover, $f_i$ is holomorphic on $U_i \cup V_i$. The complement $\phi_i(S^1)$ of $U_i \cup V_i$ is a quasicircle, and thus it has zero Lebesgue measure. Since being a quasi-regular map that is holomorphic almost everywhere, $f_i$ is holomorphic on $\mathbb{P}^1$. Since $f_i$ is conjugate to $z^d$ on $V_i$, $f_i$ is a polynomial. The Julia set $\Julia(f_i)$ is $\phi_i(\partial \Delta)$, which is a quasi-circle. Moreover, $f_i$ fixes each of the two Fatou components $\mathbb{P}^1 \setminus \Julia(f_i)$. 
Therefore $[f_i]$ is in the central component $\mathcal{H}_d$, as $\mathcal{H}_d$ is the only hyperbolic component in ${\rm poly}_d$ whose elements have quasi-circle Julia sets and the two Fatou components in the complement are fixed.
We define $\Theta^{-1}(F):=([f_1],[f_2])$.

By construction of $\Theta$ and $\Theta^{-1}$ (in particular, the construction of Beltrami coefficients), it is straightforward to see that $\Theta\circ \Theta^{-1}$ and $\Theta^{-1}\circ \Theta$ are the identity maps. Since bijective holomorphic maps are biholomorphisms, $\Theta$ is a biholomorphism.

{\it (Step 4) We show that $\Theta({\rm Diag})=\mathcal{B}_d^{fm}$}. 
By construction, for any $[f] \in \mathcal{H}_d^{fm}$, $\Theta([f],[f])$ is symmetric under $1/\bar{z}$, i.e., $(1/\bar{z})\circ \Theta([f],[f])=\Theta([f],[f]) \circ (1/\bar{z})$. Hence, $\Theta([f],[f])$ is a Blaschke product. Hence $\Theta({\rm Diag})\subset \mathcal{B}_d^{fm}$.

Fix $[F] \in \Bcal_d^{fm}$. Since $F$ is symmetric under $1/\bar{z}$, we have $\phi_1=\phi_2$ in the construction of $\Theta^{-1}([F])=([f_1],[f_2])$. Then $[f_1] = [f_2]$, and thus $[F] \in \Theta({\rm Diag})$.

Therefore, $\Theta|_{\rm Diag} \from {\rm Diag} \to \mathcal{B}_d^{fm}$ is a diffeomorphism as $\Theta$ is a biholomorphism.

(Steps 1--4) complete the proof.
\end{proof}

\subsection{Simultaneous uniformization of quasi-Blaschke products with Blaschke products (Proof of Theorem \ref{main theorem 1}).} \label{sec_holo_simul_BD}
\begin{defn}[Holomorphic structures on $\Bcal_d^{fm}$]\label{def:HoloStrB_d}
We define a holomorphic structure on $\Bcal_d^{fm}$ by pulling back the holomorphic structure of $\Hcal_d^{fm}$ via the diffeomorphism $p_1\circ \Theta|_{\rm Diag}^{-1}\from \Bcal_d^{fm} \to \Hcal_d^{fm}$, where $p_1\from \Hcal_d^{fm}\times \overline{\Hcal_d^{fm}}\to \Hcal_d^{fm}$ is the projection onto the first coordinate.
\end{defn}

Now we give a proof of Theorem \ref{main theorem 1}.
\begin{proof}[Proof of Theorem \ref{main theorem 1}]
Let $p_1\from \Hcal_d^{fm}\times \overline{\Hcal_d^{fm}}\to \Hcal_d^{fm}$ and $p_2\from \Hcal_d^{fm}\times \overline{\Hcal_d^{fm}}\to \overline{\Hcal_d^{fm}}$ be the coordinate projection maps. Then we have diffeomorphisms $p_1\circ \Theta|_{\rm Diag}^{-1}\from \Bcal_d^{fm} \to \Hcal_d^{fm}$ and $p_2\circ \Theta|_{\rm Diag}^{-1}\from \overline{\Bcal_d^{fm}} \to \overline{\Hcal_d^{fm}}$, which are respectively biholomorphic and anti-holomorphic with respect to the holomorphic structures defined in Definition \ref{def:HoloStrB_d}. 
The product of these maps gives a biholomorphism $p\from \Bcal_d^{fm}\times \overline{\Bcal_d^{fm}}\to \Hcal_d^{fm}\times \overline{\Hcal_d^{fm}}$. Then, we define a map $\mathcal{U}\from \Bcal_d^{fm}\times \overline{\Bcal_d^{fm}} \to \mathcal{QB}_d^{fm}$ by $\mathcal{U}:=\Theta\circ p$, which is a biholomorphism by Theorem \ref{thm_simul}. This proves Theorem \ref{main theorem 1}.
\end{proof}

\begin{lem}\label{lem:Equiv Btw Mating Maps}
The two maps ${\rm Mate}\from \Bcal_d^{fm}\times \Bcal_d^{fm} \to \mathcal{QB}_d^{fm}$ and $\mathcal{U}\from \Bcal_d^{fm}\times \overline{\Bcal_d^{fm}} \to \mathcal{QB}_d^{fm}$ are identical, as smooth maps. Namely, for any $[f],[g] \in \Bcal_d^{fm}$, we have ${\rm Mate}([f],[g]) = \mathcal{U}([f],[g]).$
\end{lem}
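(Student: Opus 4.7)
The plan is to characterize both $\mathcal{U}([f],[g])$ and ${\rm Mate}([f],[g])$ by the same universal property in $\QB^{fm}_d$---that the restrictions of the standard representative to the two Fatou components $U, V$ are holomorphically conjugate to $f|_\Delta$ and $g|_{1/\Delta}$, respectively, in a manner compatible with the marking of the fixed points---and then to prove a rigidity statement saying that this property pins down a unique element of $\QB^{fm}_d$.

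For ${\rm Mate}([f],[g])$ the universal property is essentially read off from Section \ref{sec_2.2}: the map $\Psi \circ h \from \Delta \to U$ is holomorphic because the complex dilatations $\mu$ of $h^{-1}$ and of $\Psi|_\Delta$ coincide and cancel, and by construction it conjugates $f|_\Delta$ to $F|_U$; similarly $\Psi|_{1/\Delta}$ is holomorphic (as $\mu = 0$ on $1/\Delta$) and conjugates $g|_{1/\Delta}$ to $F|_V$, with $h(1) = 1$ and $\Psi(1) = 1$ ensuring compatibility with the marking. For $\mathcal{U}([f],[g]) = \Theta([\tilde f],[\tilde g])$, where $[\tilde f], [\tilde g] \in \Hcal^{fm}_d$ are characterized by $\Theta([\tilde f], [\tilde f]) = [f]$ and $\Theta([\tilde g], [\tilde g]) = [g]$, I would apply Theorem \ref{thm_simul}(1) twice: $\Theta([\tilde f],[\tilde g])|_U$ is holomorphically conjugate to $\tilde f|_{{\rm int}(K(\tilde f))}$, which in turn is holomorphically conjugate to $f|_\Delta$ because $\Theta([\tilde f],[\tilde f]) = [f]$, so composing yields the needed conjugacy $U \to \Delta$. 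On $V$, Theorem \ref{thm_simul}(1) provides an anti-holomorphic conjugacy to $\tilde g|_{{\rm int}(K(\tilde g))}$; composing further with the anti-holomorphic involution $z \mapsto 1/\bar z$ (which conjugates $g|_\Delta$ to $g|_{1/\Delta}$ by Blaschke symmetry) yields a holomorphic conjugacy $V \to 1/\Delta$.

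The main step---and the anticipated obstacle---is the rigidity statement: if $F, F'$ are standard representatives in $\QB^{fm}_d$ whose Fatou component restrictions are holomorphically conjugate to $f|_\Delta$ and $g|_{1/\Delta}$ with compatible markings, then $F = F'$. The hypothesized conjugacies assemble into biholomorphisms $\gamma_U \from U \to U'$ and $\gamma_V \from V \to V'$ intertwining the dynamics of $F$ and $F'$, and each extends continuously to the boundary because the Fatou component dynamics are hyperbolic and the boundaries are quasi-circles. The two boundary extensions $\gamma_U|_{\Julia(F)}$ and $\gamma_V|_{\Julia(F)}$ are topological conjugacies from $F|_{\Julia(F)}$ to $F'|_{\Julia(F')}$ respecting the marked fixed points; since any orientation-preserving self-conjugacy of $(S^1, z^d)$ is a rotation by a $(d-1)$-th root of unity, and the marking forces the rotation to be trivial, this boundary conjugacy is unique, so $\gamma_U$ and $\gamma_V$ agree on $\Julia(F)$ and glue to a homeomorphism $\gamma \from \mathbb{P}^1 \to \mathbb{P}^1$. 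Being holomorphic off a set of Lebesgue measure zero (the quasi-circle $\Julia(F)$), $\gamma$ is a Möbius transformation; the marking forces it to fix $0, 1, \infty$, hence $\gamma = {\rm id}$ and $F = F'$. Applying this to $\mathcal{U}([f],[g])$ and ${\rm Mate}([f],[g])$ gives pointwise equality, which promotes to equality as smooth maps since both ${\rm Mate}$ (Lemma \ref{lem_smooth_fam}) and $\mathcal{U}$ (Theorem \ref{main theorem 1}) are smooth.
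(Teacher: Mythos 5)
Your overall strategy is viable and is essentially a repackaging of the paper's argument: the paper proves the lemma by applying the explicit inverse $\mathcal{U}^{-1}=p^{-1}\circ\Theta^{-1}$ to ${\rm Mate}([f],[g])$ and checking, via the Fatou-component conjugacies and the fixed-point markings, that the result is $([f],[g])$, whereas you prove a uniqueness (rigidity) statement in the target $\QB_d^{fm}$ and verify that both outputs satisfy the same characterizing property. Your rigidity lemma is a reasonable way to organize this, but as written it has two gaps. First, the step ``$\gamma$ is holomorphic off a set of Lebesgue measure zero, hence M\"obius'' is not justified: a homeomorphism of $\mathbb{P}^1$ that is conformal off a compact set of measure zero need not be M\"obius in general (measure zero does not imply conformal removability in the homeomorphic category; the paper's measure-zero argument is for maps already known to be quasiregular, which $\gamma$ is not a priori). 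What rescues the step is that $\Julia(F)$ is a quasicircle and quasicircles are conformally removable; alternatively, extend $\gamma_U$ and $\gamma_V$ to quasiconformal maps of $\mathbb{P}^1$, glue them quasiconformally along the quasicircle, and invoke Weyl's lemma. Either fix should be stated explicitly.

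Second, and more substantively, the marking compatibility on which your rigidity lemma entirely depends is asserted rather than verified, and this is precisely the step the paper singles out as the content of the proof (``carefully checking images of markings''). Theorem \ref{thm_simul}(1) contains no marking information, so composing the unmarked conjugacies it provides does not yet show that $\mathcal{U}([f],[g])$ satisfies your universal property: you must go back into the construction of $\Theta$, where the marking of the image is fixed by $x_3=1$ and counterclockwise order on $\Julia(F)$ (equivalently by the points $\phi_{f*g}(\zeta^k)$), and check that the conjugacies $\phi_{\tilde f}\circ\phi_{\tilde f*\tilde g}^{-1}$ and its counterpart on $V$ match \emph{all} marked Julia fixed points, not only $1$. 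Likewise for ${\rm Mate}$ you must specify how the fixed points of the mating are marked and check that the boundary values of $\Psi\circ h$ and $\Psi$ carry the markings of $f$ and $g$ to them; here $h(1)=1$ together with orientation preservation does force $h$ to match all circle markings, but citing only $h(1)=1$ and $\Psi(1)=1$ leaves the point where uniqueness can genuinely fail---the residual rotational ambiguity of order dividing $d-1$ in your centralizer argument, and the equality of the marked tuples (not merely of the rational maps) needed for equality in $\QB_d^{fm}$---unaddressed. With these two points filled in, your argument goes through and agrees in substance with the paper's.
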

\begin{proof}
    The inverse $\mathcal{U}^{-1}$ is equal to $p^{-1} \circ \Theta^{-1}$, where $\Theta^{-1}\from \QB_d^{fm}\to \Hcal_d^{fm}\times \overline{\Hcal_d^{fm}}$ is constructed in the proof of Theorem \ref{thm_simul} and $p\from \Bcal_d^{fm}\times \overline{B_d^{fm}}\to \Hcal_d^{fm}\times \overline{\Hcal_d^{fm}}$ is constructed in the proof of Theorem \ref{main theorem 1}. Let $f$ and $g$ be the standard representatives of $[f]\in \Bcal_d^{fm}$ and $[g]\in \overline{\Bcal_d^{fm}}$, respectively. Recall that the dynamics of ${\rm Mate}([f],[g])$ on two Fatou components are holomorphically conjugate to $f\from \Delta\to \Delta$ and $g\from 1/\Delta\to 1/\Delta$. By carefully checking images of markings of the fixed points under $\mathcal{U}$ and ${\rm Mate}$, one can show that $\mathcal{U}^{-1}({\rm Mate}([f],[g]))=([f],[g])$.
\end{proof}

\section{Proofs of Theorems \ref{main theorem 2}, \ref{main theorem 3} and \ref{theorem:Path metric}} \label{sec_pf_1.2}
We prove Theorems \ref{main theorem 2}, \ref{main theorem 3} and \ref{theorem:Path metric} in this section.
We start with preliminary lemmas in Section \ref{sec_4.1}. In Section \ref{sec_4.2}, we prove a key property about the Weil--Petersson semi-norms in relation to the complex structure of $\Bcal_d^{fm}$, namely Proposition \ref{prop_Ivrii_trick}. In Section \ref{sec_4.3}, we show that $C^1$-paths in $\Bcal^{fm}_d$ have non-zero lengths, which together with arguments in \cite[Section 5]{BH24} implies Theorem \ref{theorem:Path metric}. Finally, we prove Theorems \ref{main theorem 2} and \ref{main theorem 3} in Section \ref{sec_4.4}.

\subsection{Auxiliary lemmas} \label{sec_4.1}

\begin{defn}[(Anti-)holomorphic fixed points]\label{def_anti_hol_fixed}
Suppose that $[f]\in \Bcal_d^{fm}$ and $(f;0,\infty,1,x_4,x_5,\dots,x_{d+1})$ is its standard representative (see Definition \ref{defn:StandardRep}). By $p_1\circ \Theta|_{\rm Diag}^{-1}\from \Bcal_d^{fm}\to \Hcal_d^{fm}$, the attracting fixed point $0$ of $f$ is mapped to the attracting fixed point in $\Cbb$ of the polynomial $p_1\circ \Theta|_{\rm Diag}^{-1}([f])\in \Hcal_d^{fm}$. Therefore, the multiplier of $0$ is holomorphic with respect to the holomorphic structure of $\Bcal_d^{fm}$. By the $1/\overline{z}$-symmetry of Blaschke product, the multiplier of $\infty$ is anti-holomorphic with respect to the holomorphic structure of $\Bcal_d^{fm}$. We call $0$ the {\it holomorphic fixed point} of $f$ and $\infty$ the {\it anti-holomorphic fixed point}.

For any $[f] \in \mathcal{B}_d^{\mathrm{fm}}$, the \emph{holomorphic attracting multiplier} 
$\lambda_{h.att}(f)$ is the multiplier of the holomorphic attracting fixed point of $f$.
The multiplier of the anti-holomorphic attracting fixed point is then equal to 
$\overline{\lambda_{h.att}(f)}$, the complex conjugate of $\lambda_{h.att}(f)$.
\end{defn}

\begin{lem} \label{lem_4.5}
Let $([f_t])_{t\in(-1,1)}$ be a smooth path in $\Bcal^{fm}_d$, and let $\vec{v}\in T_{[f_0]}\Bcal^{fm}_d$ be its tangent vector at $t=0$. Let $r(t)$ and $\theta(t)$ be such that
$$\lambda_{h.att}([f_t]) =r(t) e^{i\theta(t)}.$$ 
If $||\vec{v}||_{WP} = 0$, then for any integer $n \ge 1$, we have
\begin{equation} \label{eq_dot_r}
\dot{r}\left(2r^n-r^{2n}\cos{n\theta}-\cos{n\theta} \right)=-r(1-r^{2n})\dot{\theta}\sin{n\theta},
\end{equation}
where $\dot{r}:=\left.\frac{d}{dt}\right|_{t=0}r(t)$, $\dot{\theta}:=\left.\frac{d}{dt}\right|_{t=0}\theta(t)$, $r:=r(0)$, and $\theta:=\theta(0)$.
\end{lem}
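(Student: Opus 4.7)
The plan is to combine Lemma \ref{lem_degen} with the holomorphic index formula \eqref{eqn:HoloIndex} and then carry out a direct derivative computation; the argument is purely algebraic once these two inputs are in place.

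First, by Lemma \ref{lem_degen}, the hypothesis $||\vec{v}||_{WP}=0$ forces $\left.\frac{d}{dt}\right|_{t=0}\lambda_C([f_t])=0$ for every repelling cycle $C$. Fix $n\ge 1$ and apply the identity \eqref{eqn:HoloIndex} to the path $(f_t)$. Every summand on the left-hand side is a smooth function of multipliers of repelling cycles for $f_t$, each of which has vanishing $t$-derivative at $t=0$, so differentiating both sides of \eqref{eqn:HoloIndex} at $t=0$ yields
\[
    \left.\frac{d}{dt}\right|_{t=0} \frac{1-|\lambda_{h.att}(f_t)^n|^2}{|1-\lambda_{h.att}(f_t)^n|^2}=0.
\]

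Writing $\lambda_{h.att}(f_t)=r(t)e^{i\theta(t)}$, I have $|\lambda_{h.att}(f_t)^n|^2=r(t)^{2n}$ and $|1-\lambda_{h.att}(f_t)^n|^2=1-2r(t)^n\cos(n\theta(t))+r(t)^{2n}$, so the vanishing derivative above reduces to
\[
    \left.\frac{d}{dt}\right|_{t=0}\frac{1-r(t)^{2n}}{1-2r(t)^n\cos(n\theta(t))+r(t)^{2n}}=0.
\]
A direct application of the quotient rule and the chain rule, followed by cancelling the common factor $2nr^{n-1}$ and collecting the terms multiplying $\dot r$ and $\dot\theta$ respectively, produces exactly Equation \eqref{eq_dot_r}.

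The only real obstacle is the algebraic bookkeeping in this final expansion; no conceptual input beyond Lemma \ref{lem_degen} and the holomorphic index formula \eqref{eqn:HoloIndex} is required.
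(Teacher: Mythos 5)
Your proposal is correct and follows essentially the same route as the paper's proof: apply Lemma \ref{lem_degen} to kill the $t$-derivatives of all repelling multipliers, differentiate the holomorphic index formula \eqref{eqn:HoloIndex} along the path so that only the right-hand side $\frac{1-r(t)^{2n}}{1+r(t)^{2n}-2r(t)^n\cos(n\theta(t))}$ contributes, and then expand the derivative to obtain \eqref{eq_dot_r}. The quotient-rule bookkeeping you describe (including cancelling the factor $2nr^{n-1}$) is exactly the computation the paper leaves implicit.
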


\begin{proof}
By Lemma \ref{lem_degen}, for any repelling $n$-cycle $C$ with $n\ge1$, we have
$$\left.\frac{d}{d t}\right|_{t=0} \lambda_C([f_t])=0$$ where $\lambda_{C}$ is the multiplier of the cycle $C$.
Then, by taking derivative of Equation \eqref{eqn:HoloIndex} with respect to $t$, we obtain
\[
    0=\left.\frac{d}{d t}\right|_{t=0} \frac{1-r(t)^{2n}}{|1-r(t)^n e^{ni \theta(t)}|^2}=\left.\frac{d}{d t}\right|_{t=0} \frac{1-r(t)^{2n}}{1+r(t)^{2n}-2r(t)^n\cos(n\theta(t))}.
\]
By evaluating the derivative in the right-hand side, we obtain Equation \eqref{eq_dot_r}.
\end{proof}

\begin{lem} \label{lem_cases}
If Equation \eqref{eq_dot_r} holds for every $n\ge1$, then one of the following three holds. 
\begin{enumerate}
    \item $r\neq0$, $\dot{r}=0$, and $\theta\in\{0,\pi\}$ with $\dot{\theta}\neq0$.
    \item $r\neq 0$, $\dot{r}=0$, and $\dot{\theta}=0$.
    \item $r=0$ and $\dot{r}=0$.
\end{enumerate}
\end{lem}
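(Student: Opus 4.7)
My plan is to split on whether $r=0$ or $r\in(0,1)$, and in the second situation further on the arithmetic nature of $\theta/\pi$. The key observation is that $r<1$ forces $r^n,r^{2n}\to 0$ as $n\to\infty$, so Equation \eqref{eq_dot_r} is asymptotically governed by $(\cos n\theta,\sin n\theta)$, whose orbit in $S^1$ depends on whether $\theta/\pi$ is rational.

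In the case $r=0$, Equation \eqref{eq_dot_r} collapses to $-\dot r\cos n\theta=0$ for every $n\ge 1$. Since $\cos\theta=0$ forces $\cos 2\theta=-1\ne 0$, the identity $\cos n\theta=0$ cannot hold for all $n$; hence $\dot r=0$, which is case (3).

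Suppose now $0<r<1$. The main step is to show $\dot r=0$. If $\theta/\pi$ is irrational, then by Weyl's equidistribution theorem $\{(\cos n\theta,\sin n\theta)\}_{n\ge 1}$ is dense in $S^1$. Picking a subsequence $n_k\to\infty$ with $(\cos n_k\theta,\sin n_k\theta)\to(c,s)\in S^1$ and using $r^{n_k}\to 0$, Equation \eqref{eq_dot_r} passes to the limit as $\dot r\,c=r\dot\theta\,s$; density lets us substitute $(c,s)=(1,0)$ and $(0,1)$, giving $\dot r=0$ and $\dot\theta=0$ (using $r\ne 0$), i.e.\ case (2). If instead $\theta=p\pi/q$ in lowest terms, setting $n=q$ yields $\sin n\theta=0$ and $\cos n\theta=(-1)^p$, so Equation \eqref{eq_dot_r} reduces to $\dot r\bigl[2r^q-(-1)^p(1+r^{2q})\bigr]=0$; the bracket equals $-(1-r^q)^2$ for $p$ even and $(1+r^q)^2$ for $p$ odd, both nonzero for $r\in(0,1)$, so $\dot r=0$.

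With $\dot r=0$ in hand and $r\in(0,1)$, Equation \eqref{eq_dot_r} becomes $r(1-r^{2n})\dot\theta\sin n\theta=0$ for every $n\ge 1$; since $r(1-r^{2n})\ne 0$, either $\dot\theta=0$ (case (2)) or $\sin n\theta=0$ for every $n$, which already for $n=1$ forces $\theta\in\{0,\pi\}$ (case (1)). I expect the main obstacle to be the irrational subcase, where an exact identity valid for each finite $n$ must be turned into a limiting identity on a dense subset of $S^1$ via equidistribution; the rational subcase, by contrast, reduces to an elementary factorization after the targeted choice $n=q$.
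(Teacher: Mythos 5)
Your proof is correct and follows essentially the same route as the paper: split on whether $\theta/\pi$ is rational, use $r^n\to 0$ together with equidistribution of $n\theta$ to force $\dot r=0$ (with the rational case reduced to the factorizations $(1\mp r^q)^2\neq 0$), and then read off the three cases from $r(1-r^{2n})\dot\theta\sin n\theta=0$. Your handling of the irrational subcase—passing to the limit along subsequences with $(\cos n_k\theta,\sin n_k\theta)\to(c,s)$ and substituting $(1,0)$ and $(0,1)$—is a cleaner packaging of the paper's quantitative estimate involving $\cot n_k\theta$, but it rests on the same underlying idea.
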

\begin{proof}
First, we prove $\dot{r} =0$ by contradiction. Assume that $\dot{r} \neq 0$.

Consider the case when $\theta\in \mathbb{Q}\pi$. Then, there is an infinite subset $S\subset \mathbb{N}$ such that $\sin n\theta= 0$ for any $n\in S$. It follows that $\cos n\theta=\pm 1$ for any $n\in S$. By Equation \eqref{eq_dot_r}, with $\dot{r}\neq 0$, we have $2r^n-r^{2n}\cos n\theta - \cos n\theta = 2r^{n} \pm r^{2n} \pm 1= 0$ for any $n\in S$. This implies that $r^{n}=\pm 1$ for any $n \in S$, and it contradicts $r<1$.

Now, consider the case when $\theta \notin \mathbb{Q}\pi$. In this case, $\sin n\theta \neq 0$ for any non-zero integer $n$. Hence, we have
\begin{equation}\label{eqeq}
\dot{r} \left( \frac{2r^n}{\sin n\theta} - r^{2n} \cot n\theta -\cot n\theta\right) = -r\dot{\theta}+r^{2n}\dot{\theta}.
\end{equation}
Furthermore, $\left\{n \frac{\theta}{\pi}\right\}=n\frac{\theta}{\pi}- [n\frac{\theta}{\pi}]$ is equidistributed between $0$ and $1$. 
Especially, there exist a constant $c>0$ and an infinite sequence $\left\{n_{k}\right\}$ of positive integers satisfying the following: For each $k\ge 1$,
\begin{enumerate}
    \item[(i)] $\sin(n_k\theta) > c>0$  and
    \item[(ii)] $|r\dot{\theta} - \dot{r}\cot n_{k}\theta|>c>0$.
\end{enumerate}
By (i), we have $|\cot n_{k}\theta|<1/c$ for any $k\ge 1$. Therefore, since $0<r<1$ and $n_k\to \infty$ as $k\to\infty$, there exist $\epsilon \in (0,c)$ and $N$ such that for any $k>N$, the following hold:
\[
    |r^{2n_{k}}\dot{\theta}| <\epsilon/2
    ~~~{\rm and}~~~
    \left|\dot{r}\left(\frac{2 r^{n_k}}{\sin n_k\theta} - r^{2 n_k} \cot n_k\theta -\cot n_k\theta\right)+\dot{r}\cot n_{k}\theta\right| < \epsilon/2.
\]
Then, it follows from Equation\eqref{eqeq} that
\[
    \left|-r\dot{\theta}+\dot{r}\cot n_{k}\theta\right| <\epsilon<c
\]
for all $k>N$. This contradicts (ii), which states that $|\dot{r}\theta-\dot{r}\cot n_{k}\theta|>c$. Therefore, we have $\dot{r}=0$.

As $\dot{r} =0$, the left-hand side of Equation \eqref{eq_dot_r} equals 0 for all $n\ge1$. Therefore, we have two cases: $r=0$ and $r\neq 0$. If $r=0$, then we obtain Case (3) in the statement. If $r\neq0$, then either $\sin n\theta = 0$ for all $n\ge1$ or $\dot{\theta} = 0$, which correspond to Cases (1) and (2) in the statement, respectively.
\end{proof}

\begin{lem}\label{lem_perp}
Let $U$ be a open set in $\mathbb{C}^n$ and $f\from U \to \mathbb{C}$ be a function that is holomorphic or anti-holomorphic. Let $J$ and $J'$ be the almost complex structures on the real tangent bundles $TU$ and $T\mathbb{C}$, respectively. Then, for any $x\in U$ and $\vec{v}\in T_x U$, we have $D_xf(\vec{v})\cdot D_x f(J\vec{v})= 0$. Here the inner product is the Euclidean inner product on $\mathbb{C}$. 
\end{lem}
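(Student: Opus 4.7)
The plan is to reduce the statement to the basic observation that, for the Euclidean inner product on $\mathbb{C}$, every $w\in\mathbb{C}$ is perpendicular to $iw$. Writing $w=a+bi$, we have $iw=-b+ai$, so $w\cdot(iw)=a(-b)+b\cdot a=0$.

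Suppose first that $f$ is holomorphic. Then $D_x f\from T_xU\to T_{f(x)}\mathbb{C}$ is $\mathbb{C}$-linear with respect to the almost complex structures, i.e.\ $D_xf\circ J=J'\circ D_xf$. Hence for any $\vec{v}\in T_xU$, one has $D_xf(J\vec{v})=J'\bigl(D_xf(\vec{v})\bigr)$. Under the standard identification of $T_{f(x)}\mathbb{C}$ with $\mathbb{C}$, the operator $J'$ is multiplication by $i$, so $D_xf(\vec{v})$ and $D_xf(J\vec{v})$ differ by the factor $i$. By the preliminary observation, they are Euclidean-orthogonal, giving $D_xf(\vec{v})\cdot D_xf(J\vec{v})=0$.

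If instead $f$ is anti-holomorphic, then $\bar f$ is holomorphic, so $D_xf=\overline{D_x\bar f}$ is conjugate $\mathbb{C}$-linear, meaning $D_xf\circ J=-J'\circ D_xf$. Therefore $D_xf(J\vec{v})=-J'\bigl(D_xf(\vec{v})\bigr)=-i\,D_xf(\vec{v})$, and the same orthogonality argument applies.

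There is no real obstacle here; the lemma is a direct consequence of the Cauchy--Riemann equations together with the elementary fact that multiplication by $i$ is an isometric rotation by $\pi/2$ of $\mathbb{C}$ with its Euclidean structure. The only care needed is to treat the holomorphic and anti-holomorphic cases in parallel, which differ only by a sign in the commutation with $J$.
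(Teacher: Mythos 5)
Your argument is correct and is essentially the same as the paper's: in both, holomorphy (resp.\ anti-holomorphy) gives $D_xf(J\vec{v})=\pm J'\,D_xf(\vec{v})$, and since $J'$ acts as rotation by $\pi/2$ (multiplication by $i$) on $\mathbb{C}$, the two image vectors are Euclidean-orthogonal. No gaps; nothing further is needed.
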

\begin{proof}
    Since $f$ is holomorphic { (resp.\@ anti-holomorphic)}, we have $D_xf(J\vec{v})=J'\cdot D_xf(\vec{v})$ (resp.\@ $D_xf(J\vec{v})=-J'\cdot D_xf(\vec{v})$). Recall that $J'$ acts as the $\pi/2$-rotation, i.e.,
    \[
        J' \cdot \partial_x =\partial_y ~~~{\rm and}~~~ J' \cdot \partial_y =-\partial_x.
    \]
    Therefore, the vector $J'\cdot D_xf(\vec{v}) \in \mathbb C$ (resp.\@ ${ -} J'\cdot D_xf(\vec{v})$) is perpendicular to the vector $D_xf(\vec{v}) \in \mathbb C$. Hence the desired conclusion follows.
\end{proof}

\subsection{Weil--Petersson semi-norm and almost complex structure} \label{sec_4.2}

\begin{lem}\label{lem_vf_J}
Let $\eta_{\Vec{v}}(z)$ be the holomorphic vector field on $\Delta$ defined by Equation \eqref{def_eta_vf}. Let $J$ be the almost complex structure on $\Bcal_d^{fm}$ given by Definition \ref{def:HoloStrB_d}. Then we have $\eta_{J\cdot \vec{v}}(z) =  -i \eta_{\vec{v}}(z)$, for all $z \in \Delta$.
\end{lem}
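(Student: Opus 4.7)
My plan is to deduce the identity from two facts: (i) viewing the second coordinate with the opposite complex structure, the mating map $\mathcal{U}$ from Theorem \ref{main theorem 1} (which agrees with ${\rm Mate}$ by Lemma \ref{lem:Equiv Btw Mating Maps}) is a biholomorphism; and (ii) the conjugating map $H_t$ depends holomorphically on $F_t$ when $F_t$ varies holomorphically. The computation of $\eta_{J\vec{v}}$ then reduces to a straightforward chain-rule calculation in a local holomorphic coordinate.

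First, I would fix $[f_0]$ and consider $\Phi\from \Bcal_d^{fm}\to \QB_d^{fm}$ defined by $\Phi([g]):=\mathcal{U}([f_0],[g])$. Since $\mathcal{U}\from \Bcal_d^{fm}\times \overline{\Bcal_d^{fm}}\to \QB_d^{fm}$ is a biholomorphism and $\overline{\Bcal_d^{fm}}$ carries the complex structure $-J$, the map $\Phi$ is anti-holomorphic from $(\Bcal_d^{fm}, J)$ to $(\QB_d^{fm}, J_{\QB})$. Hence $F_t=\Phi([f_t])$ depends anti-holomorphically on $[f_t]\in\Bcal_d^{fm}$.

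Next, I would argue that for each fixed $z\in\Delta$, the map $[g]\mapsto H_{[g]}(z)$ inherits this anti-holomorphic dependence; this is the \emph{main technical step}. The map $H_t$ is characterized uniquely by $H_t\circ F_0=F_t\circ H_t$ together with $H_0={\rm id}$, and this characterization varies holomorphically when $F_t$ varies holomorphically in the hyperbolic component $\QB_d^{fm}$. One way to see this is to expand both sides of the functional equation as formal power series in a complex parameter and solve order by order using the initial condition $H_0={\rm id}$, noting that $F_t|_{U_t}$ has the same attracting multiplier as $f_0|_\Delta$ for all $t$ by Theorem \ref{thm_simul}(1). Alternatively, one can invoke the $\lambda$-lemma together with the uniqueness of the conjugation modulo the finite automorphism group of $f_0|_\Delta$. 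Combined with the previous paragraph, $[g]\mapsto H_{[g]}(z)$ is anti-holomorphic on $(\Bcal_d^{fm},J)$ for each fixed $z\in\Delta$.

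Finally, I would carry out the coordinate computation. Picking a local holomorphic coordinate $w$ on $\Bcal_d^{fm}$ centered at $[f_0]$, the anti-holomorphic dependence gives $H_t(z)=h(z,\overline{w([f_t])})$ for some $h(z,\cdot)$ holomorphic in its second slot. The chain rule yields
\[
\eta_{\vec{v}}(z)=\partial_{\bar w}h(z,0)\cdot\overline{\vec{v}(w)}\quad\text{and}\quad\eta_{J\vec{v}}(z)=\partial_{\bar w}h(z,0)\cdot\overline{(J\vec{v})(w)}.
\]
Since $w$ is holomorphic, $(J\vec{v})(w)=i\,\vec{v}(w)$, so $\overline{(J\vec{v})(w)}=-i\,\overline{\vec{v}(w)}$, giving $\eta_{J\vec{v}}(z)=-i\,\eta_{\vec{v}}(z)$ as desired.
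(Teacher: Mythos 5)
Your skeleton matches the paper's: step (i), the anti-holomorphic dependence of $F_t=\mathcal{U}([f_0],[f_t])$ on $t$, is exactly how the paper begins (via Theorem \ref{main theorem 1} and Lemma \ref{lem:Equiv Btw Mating Maps}), and once one knows that $t\mapsto H_t(z)$ is anti-holomorphic for each fixed $z$, your final chain-rule computation producing the factor $-i$ is correct (the paper runs the same computation through the Beltrami-coefficient derivative formula of Imayoshi--Taniguchi). The gap is precisely at your self-declared main technical step: you never prove that the conformal conjugacy $H_t$ depends anti-holomorphically on the parameter, and neither of the two justifications you sketch does the job. The formal power-series route only produces a formal solution of $H_t\circ F_0=F_t\circ H_t$ order by order; you would still need convergence, an argument for order-by-order solvability, and an identification of the formal solution with the actual family $H_t$, which a priori is only known to vary smoothly in $t$ -- and the linearization-flavored argument is delicate exactly when $\lambda_{att}(f_0)=0$, a case the lemma must cover since Proposition \ref{prop_Ivrii_trick} is applied on $\mathcal{SA}^{fm}_d$ in the proof of Theorem \ref{main theorem 3}. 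The $\lambda$-lemma route is essentially circular: the hypothesis of the $\lambda$-lemma is a holomorphic motion, i.e.\ pointwise holomorphic dependence on the parameter, which is what you are trying to establish; and its conclusion concerns quasiconformal extensions of a motion of a set (typically the Julia set), not the conformal conjugacy on the Fatou component, while uniqueness modulo the finite centralizer of $f_0|_\Delta$ only rules out ambiguity, it does not upgrade a continuous selection to a holomorphic one.

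The paper closes this step concretely rather than abstractly: it identifies $H_t|_\Delta$ with $\psi_t=\phi_{0,t}\circ\phi_{0,0}^{-1}$, where $\phi_{0,t}$ are the normalized quasiconformal maps from the proof of Theorem \ref{thm_simul}, whose Beltrami coefficients are independent of $t$ on $\Delta$ and depend anti-holomorphically on $t$ on $1/\Delta$; the parametrized measurable Riemann mapping theorem then gives that $t\mapsto\phi_{0,t}(z)$ is anti-holomorphic, $\psi_t$ is conformal on $\Delta$, conjugates $f_0$ to $F_t$, and satisfies $\psi_0=\mathrm{id}$, so uniqueness of the family $H_t$ identifies the two and yields the anti-holomorphicity you need. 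If you want to keep your abstract formulation, you must supply an argument of comparable strength (for instance, holomorphically varying linearizing coordinates together with control of the normalizing constant that makes the local conjugacy extend globally, including the super-attracting case); as written, this step is a genuine gap.
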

\begin{proof}
Let $([f_t])_{t \in \Delta}$ be a holomorphic family in $\Bcal_d^{fm}$. Then, by Theorem \ref{main theorem 1} and Lemma \ref{lem:Equiv Btw Mating Maps}, the family of matings $([F_t]:={\rm Mate}([f_0],[f_t]))_{t \in \Delta}$ is an {anti-}holomorphic family in ${\rm rat}_d^{fm}$. Assume that $f_t$'s and $F_t$'s are standard representatives.

For each $t$, there exists a holomorphic embedding $H_t \from \Delta \to \mathbb{P}^1$ such that $H_t\circ F_0 = F_t\circ H_t$ and satisfies $H_0(z) = z$ such that $\eta_{\Vec{v}}(z) := {d}/{dt}|_{t=0}H_t(z)$; see Section \ref{sec_Curt's_metric}. We note that $H_t$ extends to a quasiconformal conjugacy in a small neighborhood $N(\Delta)$ of the closure of the unit disk $\Delta$.

We claim that $H_t$ has anti-holomorphic dependence on $t$. Denote by $\phi_{0,t}\from \mathbb{P}^1 \to \mathbb{P}^1$ the quasiconformal map $\phi_{f_0,f_t}$ in Theorem \ref{thm_simul}. Recall that $\phi_{0,t}$ conjugates $z^d$ to $F_t$ near the Julia sets, and it has anti-holomorphic dependence on $t$. Hence, $\psi_t := \phi_{0,t}\circ \phi_{0,0}^{-1}$ conjugates $f_0$ to $F_t$ near the Julia sets, and it has anti-holomorphic dependence on $t$. Since $F_t$ restricted to the Fatou component containing $0$ is holomorphically conjugate to $f_0|_{\Delta}\from \Delta\to\Delta$, the $\psi_t$ on $\Delta$ coincides with $H_t$ on $\Delta$. Hence, $H_t$ is anti-holomorphic with $t$.

Let $\nu(z)$ be the Beltrami differential on $\Delta$ representing the derivative of the family of quasiconformal mappings $H_t$ at $t=0$. That is, for $\mu_t:=\partial_{\bar{z}}H_t/\partial_{z}H_t$, we have $\mu_t(z)=\overline{t}\nu(z)+\overline{t}^2\epsilon_t(z)$ with $\nu(z),\epsilon_t(z)\in L^{\infty}(\Cbb)$ for $t\in \Delta$ near $0$. Then, by \cite[Theorem 4.37]{ImaTan},  we have
\[
    \eta_{\vec{v}}(\zeta) = \frac{-1}{\pi}\iint_{\mathbb C} \nu(z)\frac{\zeta(\zeta-1)}{z(z-1)(z-\zeta)}\,dxdy, \zeta \in \mathbb C.
\]
Consider an anti-holomorphic family of quasiconformal mappings $(H'_t)_{t \in \Delta}$ defined by $H'_t:=H_{i\cdot t}$. Then, for $\mu'_t:=\partial_{\bar{z}}H'_t/\partial_{z}H'_t$, we have $\mu'_t(z)=\mu_{i\cdot t}(z)=\overline{i t}(\nu(z))+\overline{i t}^2(\epsilon_t(z))=-\overline{t}(i\nu(z))+\overline{t}^2(-i\epsilon_t(z))$. Hence, we have $\eta_{J\cdot \Vec{v}}(z) = - i \eta_{\Vec{v}}(z)$, for all $z \in \Delta$.
\end{proof}

\begin{prop}[Ivrii's trick]\label{prop_Ivrii_trick}
For any $[f] \in \Bcal_d^{fm}$ and $\vec{v} \in T_{[f]}\Bcal_d^{fm}$, we have $||\Vec{v}||_{WP} =||J\cdot \Vec{v}||_{WP}$, where $J$ is the almost complex structure on $\Bcal^{fm}_d$.     
\end{prop}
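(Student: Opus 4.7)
The proof is essentially a one-line computation once Lemma \ref{lem_vf_J} is in hand, so my plan is simply to unpack the definition of $||\cdot||_{WP}$ from Equation \eqref{eq_def_metric} and substitute the relation between the holomorphic vector fields associated to $\vec{v}$ and $J\cdot\vec{v}$ that was just established.

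More precisely, the plan is as follows. By Lemma \ref{lem_vf_J}, the holomorphic vector fields on $\Delta$ satisfy $\eta_{J\cdot \vec{v}}(z) = -i\,\eta_{\vec{v}}(z)$ for all $z \in \Delta$. Differentiating in $z$ (both sides are holomorphic in $z$) yields
\[
\eta_{J\cdot \vec{v}}'(z) = -i\,\eta_{\vec{v}}'(z),
\]
and hence $|\eta_{J\cdot \vec{v}}'(z)|^2 = |\eta_{\vec{v}}'(z)|^2$ pointwise on $\Delta$.

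Substituting this identity into the defining formula
\[
||J\cdot\vec{v}||_{WP} = \lim_{r\to 1}\frac{1}{4\pi|\log(1-r)|}\int_{|z|=r}|\eta_{J\cdot\vec{v}}'(z)|^2\,|dz|
\]
from Equation \eqref{eq_def_metric} immediately gives $||J\cdot\vec{v}||_{WP} = ||\vec{v}||_{WP}$, completing the proof.

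Since the argument reduces to Lemma \ref{lem_vf_J}, there is no real obstacle here; all of the content of the proposition is contained in the previous lemma, where the anti-holomorphic dependence of the family of matings $[F_t] = {\rm Mate}([f_0],[f_t])$ on $t$ (via $p_2\circ \Theta|_{\rm Diag}^{-1}$) was used to produce the factor of $-i$ rather than $+i$. So the proof of the proposition itself is just the display above.
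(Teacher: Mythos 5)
Your proposal is correct and matches the paper's own argument: the paper likewise deduces the proposition directly from Lemma \ref{lem_vf_J} together with the defining formula \eqref{eq_def_metric}, and your explicit step $\eta_{J\cdot\vec{v}}'(z)=-i\,\eta_{\vec{v}}'(z)$, hence $|\eta_{J\cdot\vec{v}}'|^2=|\eta_{\vec{v}}'|^2$, is just the unpacked version of that one-line deduction.
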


\begin{proof}
By Lemma \ref{lem_vf_J} and the definition of $||\cdot||_{WP}$ as in \eqref{eq_def_metric}, we have $||\vec{v}||_{WP} = ||J\cdot \vec{v}||_{WP}$ for any $v \in T_{[f]}\Bcal_d^{fm}$. 
\end{proof}

\subsection{Positivity of lengths of $C^1$-paths and Theorem \ref{theorem:Path metric}} \label{sec_4.3}
Given a $C^1$-path $\gamma \colon [0,1]\to \Bcal^{fm}_d$, we define the {\it Weil--Petersson length} $\ell_{WP}(\gamma)$ of $\gamma$ as
$$\ell_{WP}(\gamma) := \int_0^1 ||\gamma'(t)||_{WP} \,dt.$$

As mentioned in the introduction, the following lemma, together with the analyticity of $||\cdot||_{WP}$, implies that $||\cdot||_{WP}$ defines a path metric (Theorem \ref{theorem:Path metric}).

\begin{lem}\label{p:non-deg-analytic-paths}
We have $\ell_{WP}(\gamma)>0$ for every non-constant $C^1$-path $\gamma \colon [0,1]\to \Bcal^{fm}_d$.
\end{lem}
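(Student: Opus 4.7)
I will argue by contradiction. Suppose $\gamma\from [0,1]\to \Bcal^{fm}_d$ is a non-trivial $C^1$-path with $\ell_{WP}(\gamma)=0$. Since the non-negative function $t\mapsto ||\gamma'(t)||_{WP}$ integrates to zero, it vanishes almost everywhere on $[0,1]$. For each such $t$, Lemma \ref{lem_degen} gives $\tfrac{d}{dt}\lambda_C(\gamma(t))=0$ for every repelling cycle $C$ of $z\mapsto z^d$ on $S^1$. As $\gamma$ is $C^1$ and $\lambda_C$ is smooth on $\Bcal^{fm}_d$, the continuous function $t\mapsto \tfrac{d}{dt}\lambda_C(\gamma(t))$ vanishes a.e.\ and therefore identically, so $\lambda_C$ is constant along $\gamma$ for every repelling cycle $C$.

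Substituting into the holomorphic index formula \eqref{eqn:HoloIndex} and writing $\lambda_{h.att}(\gamma(t))=r(t)e^{i\theta(t)}$, the right-hand side is constant in $t$ for every $n\ge 1$. Running the arguments of Lemmas \ref{lem_4.5} and \ref{lem_cases} pointwise at every $t\in[0,1]$ yields $\dot{r}(t)=0$, so $r$ is constant; and if $r>0$, constancy of $\cos(n\theta(t))$ for every $n\ge 1$ together with continuity of $\theta$ forces $\theta$ to be constant as well. In either case, $\lambda_{h.att}$ is also constant along $\gamma$, so the entire marked multiplier spectrum is frozen along $\gamma$.

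To close the argument I invoke marked multiplier rigidity for Blaschke products: two standard representatives $f_1,f_2\in \Bcal^{fm}_d$ with $\lambda_C(f_1)=\lambda_C(f_2)$ at every repelling cycle $C$ of $z\mapsto z^d$ on $S^1$ must coincide. This rigidity is classical: Shub--Sullivan-type smooth rigidity for real-analytic expanding endomorphisms of $S^1$ says that the marked multiplier spectrum determines the $C^\infty$-conjugacy class of $f|_{S^1}$; Schwarz reflection across $S^1$ extends any such boundary conjugacy between Blaschke products to a M\"obius transformation of $\hCbb$; and the normalization by the marked fixed points $0$, $\infty$, $1$ forces this M\"obius to be the identity. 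Applying this rigidity, $\gamma$ must be constant, contradicting non-triviality.

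The main obstacle is the rigidity step: everything preceding it is a mechanical application of Lemmas \ref{lem_degen}, \ref{lem_4.5}, and \ref{lem_cases} combined with continuity of $\gamma'$, but marked multiplier rigidity is a deep classical input distinct from the analytic machinery developed in this paper. It would be desirable to replace it by an argument relying only on Ivrii's trick (Proposition \ref{prop_Ivrii_trick}) and the holomorphic structure of $\Bcal^{fm}_d$, for instance by exploiting the $J$-invariance of the null cone of $||\cdot||_{WP}$ along $\gamma$ to rule out non-trivial $C^1$-paths directly, but I do not see how to carry that out.
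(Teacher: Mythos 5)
Your proof is correct in outline and follows essentially the same route as the paper: zero Weil--Petersson length freezes the repelling multiplier spectrum along $\gamma$ via Lemma \ref{lem_degen}, and marked multiplier rigidity for expanding Blaschke products then forces $\gamma$ to be trivial --- the paper makes this rigidity precise by combining Przytycki--Urba\'nski (equal multipliers give a bi-Lipschitz, hence absolutely continuous, conjugacy between the Julia sets) with Shub--Sullivan (an absolutely continuous conjugacy between such circle maps is the restriction of a M\"obius transformation). Two remarks: your middle paragraph freezing $\lambda_{h.att}$ is redundant, since a M\"obius conjugacy of the restrictions to $S^1$ already forces global M\"obius conjugacy of the rational maps; and your gloss that ``Schwarz reflection'' upgrades a $C^\infty$ boundary conjugacy to a M\"obius map is not the right mechanism --- reflection requires real-analyticity and even then only extends the conjugacy to an annulus, so the passage to a global M\"obius transformation is precisely the dynamical content of the Shub--Sullivan theorem that the paper cites.
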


\begin{proof}
Suppose that $\gamma \colon [0,1]\to \Bcal^{fm}_d$ is a non-trivial $C^1$-path such that $\ell_{WP}(\gamma)=0$. Let $[f_t]:=\gamma(t)\in \Bcal^{fm}_d$. Then by Lemma \ref{lem_degen}, all the repelling multipliers
of $[f_t]$ are constant along $\gamma$. In particular, for any $t_1,t_2 \in (0,1)$, $[f_{t_1}]$ and $[f_{t_2}]$ have the same repelling multipliers. By \cite[Theorem 10.2.1]{PU}, the quasisymmetric conjugacy $\phi\colon J(f_{t_1}) \to J(f_{t_2})$
is Lipschitz and so is its inverse. In particular, $\phi$ is {\it absolutely continuous}. 
Then, by \cite[Theorem 4]{ShubSullivan85}, $\phi$ is the restriction of a M\"obius transformation. Therefore $f_{t_1}$ and $f_{t_2}$ are M\"obius conjugate. Hence, $\gamma$ is a trivial path. This contradiction completes the proof.
\end{proof}

We refer to \cite[Section~5]{BH24} for the non-degeneracy of the length, $\ell(\gamma) > 0$, in certain deformation spaces different from $\mathcal{B}_d^{fm}$.



\subsection{Proofs of Theorems \ref{main theorem 2} and \ref{main theorem 3}} \label{sec_4.4}

\begin{proof}[Proof of Theorem \ref{main theorem 2}]
    Theorem \ref{main theorem 2}-(2) is a consequence of Lemma \ref{p:non-deg-analytic-paths}.
    
    Let us prove Theorem \ref{main theorem 2}-(1), which states that $||\vec{v}||_{WP}= 0$ implies $\vec{v}\in T\mathcal{A}_d^{fm}(\lambda)$ for some $\lambda\in \Delta$.
    Suppose $||\vec{v}||_{WP}= 0$. Recall the three cases (1), (2), and (3) in Lemma \ref{lem_cases}. Case (3) is equivalent to $\vec{v}\in T\mathcal{A}_d^{fm}(0)$, and Case (2) is equivalent to $\vec{v}\in T\mathcal{A}_d^{fm}(\lambda)$ for $\lambda\neq 0$. Therefore, it suffices to show that Case (1) cannot happen.

    Suppose that $\vec{v}$ satisfies the condition in Case (1) of Lemma \ref{lem_cases}. Recall that the multiplier function of the holomorphic fixed points $\lambda_{h.att}\from \Bcal_d^{fm} \to \mathbb{C}$ is holomorphic with respect to $J$ on $\Bcal_d^{fm}$; see Definition \ref{def_anti_hol_fixed}. Let $([f_t])_{t\in (-1,1)}$ be a smooth path in $\Bcal_d^{fm}$ with $\vec{v}=\left.\frac{d}{dt}\right|_{t=0}[f_t]$. Observe that $r \neq 0$ is equivalent to $\lambda_{h.att}([f_0]) \neq 0$ and $\dot{r} = 0$ is equivalent to $(D_{[f_0]}|\lambda_{h.att}|)(\vec{v})=0$.  Then $D_{[f_0]} \lambda_{h.att}(\vec{v})$ is perpendicular to $\lambda_{h.att}([f_0])$ as vectors in $\mathbb{R}^2$ with the Euclidean inner product. Note that $D_{[f_0]} \lambda_{h.att}(\vec{v})\neq0$ because $\dot{\theta}\neq 0$.

    It follows from Proposition \ref{prop_Ivrii_trick} that we have $||J\cdot \vec{v}||_{WP}=0$. Applying Lemma \ref{lem_cases} to $J\cdot \vec{v}$, we see that $||J\cdot \vec{v}||_{WP}=0$ implies that $(D_{f_0}|\lambda_{h.att}|)(J \cdot \vec{v})=0$ (as $\lambda_{h.att}(f_0) \neq 0$). Hence, similarly to the previous paragraph, we have that $D_{[f_0]} \lambda_{h.att}(J\cdot \vec{v})$ is non-zero and perpendicular to $\lambda_{h.att}([f_0])$ as vectors in $\mathbb{R}^2$.

    However, by Lemma \ref{lem_perp}, $D_{[f_0]}\lambda_{h.att}(J\cdot \vec{v})$ is perpendicular to $D_{[f_0]}\lambda_{h.att}(\vec{v})$. This is a contradiction, because two non-zero vectors, $D_{[f_0]}\lambda_{h.att}(\vec{v})$ and $D_{[f_0]}\lambda_{h.att}(J \cdot \vec{v})$, which are perpendicular to each other in $\mathbb{R}^2$, cannot both be perpendicular to another non-zero vector $\lambda_{h.att}([f_0])$ in $\mathbb{R}^2$. Therefore, Case (1) of Lemma \ref{lem_cases} can happen.
\end{proof}

\begin{proof}[Proof of Theorem \ref{main theorem 3}]
    Since $\mathcal{A}^{fm}_2(\lambda)$ is a singleton for each $\lambda\in \Delta$, the non-degeneracy in $\Bcal^{fm}_2$ is immediate from Theorem \ref{main theorem 2}.
\end{proof}

\subsection{Some calculations on the non-degeneracy of $(\mathcal{B}_3^{fm},||\cdot||_{WP})$} \label{subsec:deg3}
    Each element in $\mathcal{A}^{fm}_3(\lambda)$ has a standard representative of the form
    \begin{equation}\label{eqn:deg3}
        f_{a,b}(z)=\frac{\overline{a}+1}{1+a}\frac{\overline{b}+1}{1+b} z\frac{z+a}{\overline{a}z+1}\frac{z+b}{\overline{b}z+1},
    \end{equation}
    where $a,b\in \Delta$ with
    \begin{equation}\label{eqn:Att_Multi_Cubic_Blaschke}
        \lambda=\frac{ab(1+\bar{a})(1+\bar{b})}{(1+a)(1+b)}.
    \end{equation}
    
    \noindent {\it Case 1: $\lambda=0$.}\quad Without loss of generality, we suppose $b=0$. Define
    \[
        f_a(z):=\frac{\overline{a}+1}{1+a} z^2\frac{z+a}{\overline{a}z+1},
    \]
    where $a\in \Delta$. For the multiplier $\lambda_1(f_a)$ at $1$, we have
    \[
        \lambda_1(f_a)=\frac{1}{1+a}+\frac{1}{1+\overline{a}}.
    \]

    Suppose that there exists a tangent vector $v\in T_{[f_a]}\mathcal{A}^{fm}_3(0)$ with $||v||_{WP}=0$. By Proposition \ref{prop_Ivrii_trick}, we have $||J\cdot v||_{WP}=0$. Since $\dim_{\mathbb{R}}(T_{[f_a]}\mathcal{A}^{fm}_3(0))=2$, it follows $||w||_{WP}=0$ for every $w\in T_{[f_a]}\mathcal{A}^{fm}_3(0)$. Then, by Lemma \ref{lem_degen}, we have $D_{[f_a]}\lambda_1=0$. This contradicts $\partial_a\lambda_1=-\frac{1}{1+a^2}\neq 0$ and $\partial_{\overline a}\lambda_1=-\frac{1}{1+\overline{a}^2}\neq 0$.
    
    \noindent {\it Case 2: $\lambda\neq0$.}\quad  In this case, $a,b\neq 0$. By taking $\frac{\partial}{\partial a}$ and $\frac{\partial}{\partial \bar{a}}$ to \eqref{eqn:Att_Multi_Cubic_Blaschke}, we obtain
    \[
    \left\{
        \begin{aligned}
            \frac{1+\bar{b}}{1+b}\frac{\partial b}{\partial a}+b\frac{\partial \bar{b}}{\partial a}&=-\frac{b(1+\bar{b})}{a(1+a)}\\
            \frac{1+\bar{b}}{1+b}\frac{\partial b}{\partial \bar{a}}+b\frac{\partial \bar{b}}{\partial \bar{a}}&=-b\frac{1+\bar{b}}{1+\bar{a}}.
        \end{aligned}
    \right.
    \]
    By using
    \begin{equation}\label{eqn:CPlx_Deri}
        \overline{\frac{\partial \bar{b}}{\partial a}}=\frac{\partial b}{\partial \bar{a}}~~~~ {\rm and}~~~~ \overline{\frac{\partial b}{\partial a}}=\frac{\partial \bar{b}}{\partial \bar{a}},
    \end{equation} we obtain
    \begin{equation}\label{eqn:Deri about a 1}
        \frac{\partial b}{\partial a}=-\frac{b(1+b)(1-a\bar{b})}{a(1+a)(1-|b|^2)}~~~~{\rm and}~~~~\frac{\partial \bar{b}}{\partial a} =\frac{(b-a)\bar{b}(1+\bar{b})}{a(1+a)(1-|b|^2)}.
    \end{equation}
    For the multiplier $\lambda_1(f_a)$ at $1$, we have
    \[
        \lambda_1(f_a)=-1+\frac{1}{1+a}+\frac{1}{1+b}+\frac{1}{1+\bar{a}}+\frac{1}{1+\bar{b}}.
    \]
    Hence, by using \eqref{eqn:Deri about a 1}, we obtain
    \begin{align*}
        \partial_a\lambda_1 &=-\left(\frac{1}{1+a^2}+\frac{1}{1+b^2}\frac{\partial{b}}{\partial a}+\frac{1}{1+\bar{b}^2}\frac{\partial{\bar{b}}}{\partial a}\right)\\
        &=\frac{1}{1 + a^2} - \frac{b (1 + b) (1 - a \bar{b})}{a (1 + a) (1 + b^2) (1 - |b|^2)} - \frac{(a - b) \bar{b} (1 + \bar{b})}{a (1 + a) (1 - |b|^2) (1 + \bar{b}^2)}.
    \end{align*}
    Since $|a|,|b|<1$, we obtain that $\partial_a\lambda_1=0$ if and only if $a=b$.   
    It follows that when $a = b$ we have $\partial_a \lambda_1 = 0$, and similarly, $\partial_{\bar{a}} \lambda_1 = 0$ (along $\mathcal{A}_3^{fm}(\lambda)$). By the holomorphic index formula, the multiplier of the other repelling fixed point also has the zero derivative at $a=b$ along $\mathcal{A}_3^{fm}(\lambda)$. Therefore, to show the non-degeneracy of $||\cdot||_{WP}$ by using Lemma \ref{lem_degen}, we have to look at the derivatives of the multipliers of repelling $2$-cycles, whose explicit computation is too complicated.

    To sum up, we obtain:
    \begin{prop}\label{prop:non-deg for deg3}
        For a non-zero tangent vector $\vec{v}\in T\Bcal_3^{fm}$, we have $||\vec{v}||_{WP}\neq 0$ for every $\vec{v}\notin T_{[f_{a,a}]}\mathcal{A}_3^{fm}$ with $a\in \Delta$.
    \end{prop}

\bibliography{PressureMetric}

@book {Hub_vol1,
    AUTHOR = {Hubbard, John Hamal},
     TITLE = {Teichm\"{u}ller theory and applications to geometry, topology,
              and dynamics. {V}ol. 1},
      NOTE = {Teichm\"{u}ller theory,
              With contributions by Adrien Douady, William Dunbar, Roland
              Roeder, Sylvain Bonnot, David Brown, Allen Hatcher, Chris
              Hruska and Sudeb Mitra,
              With forewords by William Thurston and Clifford Earle},
 PUBLISHER = {Matrix Editions, Ithaca, NY},
      YEAR = {2006},
     PAGES = {xx+459},
      ISBN = {978-0-9715766-2-9; 0-9715766-2-9},
   MRCLASS = {30F60 (30-02 30F10 32G15)},
  MRNUMBER = {2245223},
MRREVIEWER = {Hiroshige\ Shiga},
}

@article {Luo_GeoFiniteDegenI,
    AUTHOR = {Luo, Yusheng},
     TITLE = {On geometrically finite degenerations {I}: boundaries of main hyperbolic components.},
    JOURNAL = {J. Eur. Math. Soc.},
    VOLUME = {26},
      YEAR = {2024},
    NUMBER = {10},
     PAGES = {3793--3861},
       DOI = {10.4171/jems/1342},
}

@article {Luo_GeoFiniteDegenII,
    AUTHOR = {Luo, Yusheng},
     TITLE = {On geometrically finite degenerations {II}: {C}onvergence and
              divergence},
   JOURNAL = {Trans. Amer. Math. Soc.},
  FJOURNAL = {Transactions of the American Mathematical Society},
    VOLUME = {375},
      YEAR = {2022},
    NUMBER = {5},
     PAGES = {3469--3527},
      ISSN = {0002-9947,1088-6850},
   MRCLASS = {37F10 (37F15 37F34)},
  MRNUMBER = {4402668},
MRREVIEWER = {Peter\ Ha\"issinsky},
       DOI = {10.1090/tran/8597},
       URL = {https://doi.org/10.1090/tran/8597},
}

@article{McM_simul,
    author = {McMullen, Curtis T.},
    title = {Simultaneous uniformization of Blaschke products},
    journal = {preliminary notes},
    year = {1985},
}

@article {McM_CompExpCircle,
    AUTHOR = {McMullen, Curtis T.},
     TITLE = {A compactification of the space of expanding maps on the
              circle},
   JOURNAL = {Geom. Funct. Anal.},
  FJOURNAL = {Geometric and Functional Analysis},
    VOLUME = {18},
      YEAR = {2009},
    NUMBER = {6},
     PAGES = {2101--2119},
      ISSN = {1016-443X,1420-8970},
   MRCLASS = {37F30 (37A05 37E10)},
  MRNUMBER = {2491699},
MRREVIEWER = {Feliks\ Przytycki},
       DOI = {10.1007/s00039-009-0709-8},
       URL = {https://doi.org/10.1007/s00039-009-0709-8},
}

@article {McM_DynDisk,
    AUTHOR = {McMullen, Curtis T.},
     TITLE = {Dynamics on the unit disk: short geodesics and simple cycles},
   JOURNAL = {Comment. Math. Helv.},
  FJOURNAL = {Commentarii Mathematici Helvetici. A Journal of the Swiss
              Mathematical Society},
    VOLUME = {85},
      YEAR = {2010},
    NUMBER = {4},
     PAGES = {723--749},
      ISSN = {0010-2571,1420-8946},
   MRCLASS = {37F30 (30J10 37D05 37E10 37F15 37F25)},
  MRNUMBER = {2718137},
MRREVIEWER = {Feliks\ Przytycki},
       DOI = {10.4171/CMH/209},
       URL = {https://doi.org/10.4171/CMH/209},
}

@incollection {McM_AutRat,
    AUTHOR = {McMullen, Curtis T.},
     TITLE = {Automorphisms of rational maps},
 BOOKTITLE = {Holomorphic functions and moduli, {V}ol. {I} ({B}erkeley,
              {CA}, 1986)},
    SERIES = {Math. Sci. Res. Inst. Publ.},
    VOLUME = {10},
     PAGES = {31--60},
 PUBLISHER = {Springer, New York},
      YEAR = {1988},
      ISBN = {0-387-96766-4},
   MRCLASS = {58F99 (30D05 32G15)},
  MRNUMBER = {955807},
MRREVIEWER = {M.\ Rees},
       DOI = {10.1007/978-1-4613-9602-4\_3},
       URL = {https://doi.org/10.1007/978-1-4613-9602-4_3},
}

@article {MSS_DynRatMap,
    AUTHOR = {Ma{\~{n}}\'{e}, Ricardo and Sad, Paulo and Sullivan, Dennis},
     TITLE = {On the dynamics of rational maps},
   JOURNAL = {Ann. Sci. \'{E}cole Norm. Sup. (4)},
  FJOURNAL = {Annales Scientifiques de l'\'{E}cole Normale Sup\'{e}rieure.
              Quatri\`eme S\'{e}rie},
    VOLUME = {16},
      YEAR = {1983},
    NUMBER = {2},
     PAGES = {193--217},
      ISSN = {0012-9593},
   MRCLASS = {58F10 (30D05 58F20)},
  MRNUMBER = {732343},
MRREVIEWER = {I.\ N.\ Baker},
       URL = {http://www.numdam.org/item?id=ASENS_1983_4_16_2_193_0},
}

@article {Bers_SimultaneousUniform,
    AUTHOR = {Bers, Lipman},
     TITLE = {Simultaneous uniformization},
   JOURNAL = {Bull. Amer. Math. Soc.},
  FJOURNAL = {Bulletin of the American Mathematical Society},
    VOLUME = {66},
      YEAR = {1960},
     PAGES = {94--97},
      ISSN = {0002-9904},
   MRCLASS = {30.00},
  MRNUMBER = {111834},
MRREVIEWER = {H.\ L.\ Royden},
       DOI = {10.1090/S0002-9904-1960-10413-2},
       URL = {https://doi.org/10.1090/S0002-9904-1960-10413-2},
}

@incollection {Milnor_HypComp,
    AUTHOR = {Milnor, John},
     TITLE = {Hyperbolic components},
 BOOKTITLE = {Conformal dynamics and hyperbolic geometry},
    SERIES = {Contemp. Math.},
    VOLUME = {573},
     PAGES = {183--232},
      NOTE = {With an appendix by A. Poirier},
 PUBLISHER = {Amer. Math. Soc., Providence, RI},
      YEAR = {2012},
      ISBN = {978-0-8218-5348-1},
   MRCLASS = {37Fxx},
  MRNUMBER = {2964079},
       DOI = {10.1090/conm/573/11428},
       URL = {https://doi.org/10.1090/conm/573/11428},
}

@book {Milnor06,
    AUTHOR = {Milnor, John},
     TITLE = {Dynamics in one complex variable},
    SERIES = {Annals of Mathematics Studies},
    VOLUME = {160},
   EDITION = {Third},
 PUBLISHER = {Princeton University Press, Princeton, NJ},
      YEAR = {2006},
     PAGES = {viii+304},
      ISBN = {978-0-691-12488-9; 0-691-12488-4},
   MRCLASS = {37Fxx (30-01 30D05 37-01)},
  MRNUMBER = {2193309},
}

@article {HeNie_MetricHypComp,
    AUTHOR = {He, Yan Mary and Nie, Hongming},
     TITLE = {A {R}iemannian metric on hyperbolic components},
   JOURNAL = {Math. Res. Lett.},
  FJOURNAL = {Mathematical Research Letters},
    VOLUME = {30},
      YEAR = {2023},
    NUMBER = {3},
     PAGES = {733--764},
      ISSN = {1073-2780,1945-001X},
   MRCLASS = {37F46 (37D35 37F35)},
  MRNUMBER = {4696429},
}

@article {Bridgeman_WPMetricQF,
    AUTHOR = {Bridgeman, Martin},
     TITLE = {Hausdorff dimension and the {W}eil-{P}etersson extension to
              quasifuchsian space},
   JOURNAL = {Geom. Topol.},
  FJOURNAL = {Geometry \& Topology},
    VOLUME = {14},
      YEAR = {2010},
    NUMBER = {2},
     PAGES = {799--831},
      ISSN = {1465-3060,1364-0380},
   MRCLASS = {30F60 (30F40 32G15 37D35)},
  MRNUMBER = {2602852},
MRREVIEWER = {Zheng\ Huang},
       DOI = {10.2140/gt.2010.14.799},
       URL = {https://doi.org/10.2140/gt.2010.14.799},
}

@article {McMullen08,
    AUTHOR = {McMullen, Curtis T.},
     TITLE = {Thermodynamics, dimension and the {W}eil-{P}etersson metric},
   JOURNAL = {Invent. Math.},
  FJOURNAL = {Inventiones Mathematicae},
    VOLUME = {173},
      YEAR = {2008},
    NUMBER = {2},
     PAGES = {365--425},
      ISSN = {0020-9910},
   MRCLASS = {37F30 (30F35 30F60 32G15 37D35)},
  MRNUMBER = {2415311},
MRREVIEWER = {Zheng Huang},
       DOI = {10.1007/s00222-008-0121-2},
       URL = {https://doi.org/10.1007/s00222-008-0121-2},
}

@article {Ivrii,
	AUTHOR = {Ivrii, Oleg},
	TITLE = {The geometry of the {W}eil-{P}etersson metric in complex dynamics},
	    year={2016},
        note={arXiv:1503.02590},	
}

@book {ImaTan,
    AUTHOR = {Imayoshi, Yoichi and Taniguchi, Masahiko},
     TITLE = {An introduction to {T}eichm\"{u}ller spaces},
      NOTE = {Translated and revised from the Japanese by the authors},
 PUBLISHER = {Springer-Verlag, Tokyo},
      YEAR = {1992},
     PAGES = {xiv+279},
      ISBN = {4-431-70088-9},
   MRCLASS = {32G15 (14H15 30F10 30F60)},
  MRNUMBER = {1215481},
MRREVIEWER = {I.\ Kra},
       DOI = {10.1007/978-4-431-68174-8},
       URL = {https://doi.org/10.1007/978-4-431-68174-8},
}

@article {Lyu83typical,
    AUTHOR = {Lyubich, Mikhail Yu.},
     TITLE = {Some typical properties of the dynamics of rational mappings},
   JOURNAL = {Uspekhi Mat. Nauk},
  FJOURNAL = {Akademiya Nauk SSSR i Moskovskoe Matematicheskoe Obshchestvo.
              Uspekhi Matematicheskikh Nauk},
    VOLUME = {38},
      YEAR = {1983},
    NUMBER = {5(233)},
     PAGES = {197--198}
}

@article {HLP23,
	AUTHOR = {He, Yan Mary and Lee, Homin and Park, Insung},
	TITLE = {Pressure metrics in geometry and dynamics},
	    year={2024},
        note={arXiv:2407.18441}
}

@article {BH24,
	AUTHOR = {Bianchi, Fabrizio and He, Yan Mary},
	TITLE = {Pressure path metrics on parabolic families of polynomials},
	    year={2024},
        note={arXiv:2409.10462}
}

@book {PU,
	AUTHOR = {Przytycki, Feliks and Urba\'{n}ski, Mariusz},
	TITLE = {Conformal fractals: ergodic theory methods},
	SERIES = {London Mathematical Society Lecture Note Series},
	VOLUME = {371},
	PUBLISHER = {Cambridge University Press, Cambridge},
	YEAR = {2010},
	PAGES = {x+354},}

@article {ShubSullivan85,
    AUTHOR = {Shub, Michael and Sullivan, Dennis},
     TITLE = {Expanding endomorphisms of the circle revisited},
   JOURNAL = {Ergodic Theory Dynam. Systems},
  FJOURNAL = {Ergodic Theory and Dynamical Systems},
    VOLUME = {5},
      YEAR = {1985},
    NUMBER = {2},
     PAGES = {285--289},
      ISSN = {0143-3857,1469-4417},
   MRCLASS = {58F15},
  MRNUMBER = {796755},
       DOI = {10.1017/S014338570000290X},
       URL = {https://doi.org/10.1017/S014338570000290X},
}
\bibliographystyle{alpha}
\end{document}